\newcommand\version{April 17, 2023}
\newtheorem{theorem}{Theorem}[section]
\newtheorem{proposition}[theorem]{Proposition}
\newtheorem{lemma}[theorem]{Lemma}
\newtheorem{corollary}[theorem]{Corollary}
\theoremstyle{definition}
\theoremstyle{remark}
\newtheorem{remark}[theorem]{Remark}
\numberwithin{equation}{section}
\newcommand{\1}{\mathbbm{1}}
\newcommand{\C}{\mathbb{C}}
\renewcommand{\epsilon}{\varepsilon}
\renewcommand{\phi}{\varphi}
\newcommand{\R}{\mathbb{R}}
\DeclareMathOperator{\dom}{dom}
\def\ds{\mathrm{d}s}
\def\dt{\mathrm{d}t}
\def\dx{\mathrm{d}x}
\def\dy{\mathrm{d}y}
\def\dz{\mathrm{d}z}
\def\balpha{\bm{\alpha}}
\def\cl{\mathcal{L}}
\newcommand{\abs}[1]{\left\lvert#1\right\rvert}
\newcommand{\norm}[1]{\left\lVert#1\right\rVert}
\newcommand{\me}[1]{\mathrm{e}^{#1}}
\newcommand{\one}{\mathbf{1}}
\newcommand*{\rom}[1]{\expandafter\@slowromancap\romannumeral #1@}
\begin{document}

\title[Hardy operators and Sobolev norms --- \version]{Equivalence of Sobolev norms\\ involving generalized Hardy operators}

\author{Rupert L. Frank}
\address[Rupert L. Frank]{Mathematisches Institut, Ludwig-Maximilans Universit\"at M\"unchen, Theresienstr. 39, 80333 M\"unchen, Germany, and Munich Center for Quantum Science and Technology (MCQST), Schellingstr. 4, 80799 M\"unchen, Germany, and Mathematics 253-37, Caltech, Pasa\-de\-na, CA 91125, USA}
\email{rlfrank@caltech.edu}

\author{Konstantin Merz}
\address[Konstantin Merz]{Mathematisches Institut, Ludwig-Maximilans Universit\"at M\"unchen, Theresienstr. 39, 80333 M\"unchen, Germany, and Munich Center for Quantum Science and Technology (MCQST), Schellingstr. 4, 80799 M\"unchen, Germany}
\email{merz@math.lmu.de}

\author{Heinz Siedentop}
\address[Heinz Siedentop]{Mathematisches Institut, Ludwig-Maximilans Universit\"at M\"unchen, Theresienstr. 39, 80333 M\"unchen, Germany, and Munich Center for Quantum Science and Technology (MCQST), Schellingstr. 4, 80799 M\"unchen, Germany}
\email{h.s@lmu.de}

\begin{abstract}
We consider the fractional Schr\"odinger operator with Hardy potential and critical or subcritical coupling constant. This operator generates a natural scale of homogeneous Sobolev spaces which we compare with the ordinary homogeneous Sobolev spaces. As a byproduct, we obtain generalized and reversed Hardy inequalities for this operator. Our results extend those obtained recently for ordinary (non-fractional) Schr\"odinger operators and have an important application in the treatment of large relativistic atoms.
\end{abstract}

\thanks{\copyright\, 2019 by the authors. This paper may be
reproduced, in its entirety, for non-commercial purposes.\\
The authors are very grateful to an anonymous referee for many helpful remarks and suggestions. \\
Deutsche Forschungsgemeinschaft grants SI 348/15-1 (H.S.) and EXC-2111 390814868 (R.L.F., K.M., H.S.) and U.S.~National Science Foundation grant DMS-1363432 (R.L.F.) are acknowledged.}

\maketitle

\section{Introduction and main result}

\subsection*{Introduction}

The Hardy inequality is of fundamental importance in many questions in harmonic analysis, partial differential equations, spectral theory and mathematical physics. This inequality comes in various forms and the form that we will be concerned with here is that if $0<\alpha<d$, then
\begin{equation}
\label{eq:hardy}
\left\| |p|^{\alpha/2} f \right\|_{L^2(\R^d)}^2 \geq \mathcal H_{d,\alpha} \left\| |x|^{-\alpha/2} f \right\|_{L^2(\R^d)}^2
\qquad\text{for all}\ f\in C_c^\infty(\R^d)
\end{equation}
with a positive constant $\mathcal H_{d,\alpha}$. Here and in the following we use the notation
$$
|p|=\sqrt{-\Delta} \,.
$$
The optimal, that is, largest possible, value of the constant on the right side was found in \cite[Theorem 2.5]{Herbst1977} and is given by
$$
\mathcal H_{d,\alpha} =\frac{2^\alpha\Gamma((d+\alpha)/4)^2}{\Gamma((d-\alpha)/4)^2} \,.
$$
Of course, the special case $\alpha=2$, where the left side is $\| \nabla f\|^2_{L^2(\R^d)}$, had been known much longer and also the physically important special case $d=3$ and $\alpha=1$ appeared before \cite[Chapter 5, Equation (5.33)]{Kato1966}. For alternative proofs of the inequality with the sharp constant we refer to \cite{Kovalenkoetal1981,Yafaev1999,Franketal2008H,FrankSeiringer2008}.

Setting
$$
a_* := - \mathcal H_{d,\alpha} = - \frac{2^\alpha\Gamma((d+\alpha)/4)^2}{\Gamma((d-\alpha)/4)^2} \,,
$$
we infer from Hardy's inequality with optimal constant that the operator
$$
\cl_{a,\alpha} := |p|^\alpha + a|x|^{-\alpha}
\qquad\text{in}\ L^2(\R^d)
$$
is non-negative for $a\geq a_*$. More precisely, the operator $\cl_{a,\alpha}$ is defined as the Friedrichs extension of the corresponding expression on $C_c^\infty(\R^d)$. In some applications and, in particular, in our paper \cite{Franketal2020P}, the norms
$$
\left\| \cl_{a,\alpha}^{s/2} f \right\|_{L^2(\R^d)}
$$
for $a\geq a_*$ and $s>0$ appear naturally and one needs to understand the relation between these norm and those in the special case $a=0$, i.e.,
$$
\left\| |p|^{\alpha s/2} f \right\|_{L^2(\R^d)} \,.
$$
While it is relatively straightforward to see that the norms are equivalent for $s\in(0,1]$ if $a>a_*$ (see Remark \ref{ssmall}), the question becomes non-trivial for $s>1$ and was only recently solved in the remarkable paper \cite{Killipetal2018} in the local case $\alpha=2$. An important consequence of their result is that for any $a>a_*$ there is an $s_{a,d,\alpha}>1$ such that the norms are equivalent for all $0<s<s_{a,d,\alpha}$. In fact, \cite{Killipetal2018} also treat a more general problem with the $L^2(\R^d)$ norms replaced by those in $L^p(\R^d)$ for $1<p<\infty$ and discuss the dependence on $p$.

\subsection*{Main result}

The main result of our paper will be the generalization of the results from \cite{Killipetal2018} to the case $0<\alpha<2$. We restrict ourselves to the case of $L^2(\R^d)$ norms which is the one relevant for the application we have in mind.

In order to state our main result precisely, for $0<\alpha<2\wedge d$ we define
$$
\Psi_{\alpha,d}(\sigma) := -2^\alpha \frac{\Gamma(\frac{\sigma+\alpha}{2})\ \Gamma(\frac{d-\sigma}{2})}{\Gamma(\frac{d-\sigma-\alpha}{2})\ \Gamma(\frac{\sigma}{2})}
\qquad\text{if}\ \sigma\in (-\alpha,(d-\alpha)/2]\setminus\{0\}
$$
and $\Psi_{\alpha,d}(0)=0$. The function $\sigma\mapsto \Psi_{\alpha,d}(\sigma)$ is continuous and strictly decreasing with
$$
\lim_{\sigma\to-\alpha} \Psi_{\alpha,d}(\sigma) = \infty
\qquad\text{and}\qquad
\Psi_{\alpha,d}(\frac{d-\alpha}{2}) = a_* \,.
$$
(This can be shown as in \cite[Lemma 3.2]{Franketal2008H}, which treats the interval $[0,(d-\alpha)/2]$; see also the remark before  \cite[Lemma 2.3]{JakubowskiWang2018}.) As a consequence, for any $a\in[a_*,\infty)$ we can define
\begin{equation}
\label{eq:defdelta}
\delta := \Psi_{\alpha,d}^{-1}(a) \,.
\end{equation}

The following is our main result.

\begin{theorem}[Equivalence of Sobolev norms]
  \label{Thm:1.2}
  Let $\alpha\in(0,2\wedge d)$, $a\in [a_*,\infty)$ and let $\delta$ be defined by \eqref{eq:defdelta}. Let $s\in(0,2]$.
\begin{enumerate}
\item If $s<\frac{d-2\delta}{\alpha}$, then
    \begin{align}
      \label{eq:12a}
    \norm{\abs p^{\alpha\frac{s}{2}}f}_{L^2(\R^d)}\lesssim_{d,\alpha,a,s}\norm{\cl_{a,\alpha}^{\frac{s}{2}}f}_{L^2(\R^d)}
    \text{ for all } f\in C_c^\infty(\R^d) \,.
  \end{align}
\item If $s<\frac{d}{\alpha}$, then
  \begin{align}
    \label{eq:12b}
    \norm{\cl_{a,\alpha}^{\frac{s}{2}}f}_{L^2(\R^d)}\lesssim_{d,\alpha,a,s} \norm{\abs p^{\alpha\frac{s}{2}}f}_{L^2(\R^d)}
    \qquad\text{for all}\ f\in C_c^\infty(\R^d) \,.
  \end{align}
\end{enumerate}
\end{theorem}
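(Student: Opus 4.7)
The plan is to reduce both \eqref{eq:12a} and \eqref{eq:12b} to the $L^2(\R^d)$ boundedness of the operators
\[
T_1 := |p|^{\alpha s/2}\mathcal{L}_{a,\alpha}^{-s/2} \quad\text{and}\quad T_2 := \mathcal{L}_{a,\alpha}^{s/2}|p|^{-\alpha s/2}
\]
respectively (the former by setting $g=\mathcal{L}_{a,\alpha}^{s/2}f$, the latter by setting $g=|p|^{\alpha s/2}f$), and then to establish these boundedness statements via pointwise estimates for the integral kernel of $\mathcal{L}_{a,\alpha}^{-s/2}$ coming from a heat kernel estimate. For $s\in(0,1]$ with $a>a_*$, both reductions follow at once from Hardy's inequality and its reverse combined with the L\"owner--Heinz operator monotonicity of $t\mapsto t^s$ on $[0,\infty)$; see Remark \ref{ssmall}. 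The real content of the theorem lies in the range $s\in(1,2]$ together with the critical endpoint $a=a_*$.

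For this range I would invoke the sharp two-sided heat kernel estimates for $\mathcal{L}_{a,\alpha}$ available in \cite{JakubowskiWang2018}, which have the schematic form
\[
e^{-t\mathcal{L}_{a,\alpha}}(x,y) \asymp \Bigl(1\vee\tfrac{t^{1/\alpha}}{|x|}\Bigr)^{\delta}\Bigl(1\vee\tfrac{t^{1/\alpha}}{|y|}\Bigr)^{\delta}\, p^{(\alpha)}_t(x-y),
\]
where $p^{(\alpha)}_t(z)\asymp t^{-d/\alpha}\wedge t|z|^{-d-\alpha}$ is the free fractional heat kernel. Inserting this into the subordination formula
\[
\mathcal{L}_{a,\alpha}^{-s/2} = \frac{1}{\Gamma(s/2)}\int_0^\infty t^{s/2-1}e^{-t\mathcal{L}_{a,\alpha}}\,dt
\]
and integrating in $t$ yields pointwise bounds for the kernel of $\mathcal{L}_{a,\alpha}^{-s/2}$ as a Riesz-type kernel decorated with the desingularizing weights $(1\vee|x-y|/|x|)^\delta$ and $(1\vee|x-y|/|y|)^\delta$. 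Comparing this with the bare Riesz kernel $|p|^{-\alpha s}(x,y)\asymp c_{d,\alpha s}|x-y|^{\alpha s-d}$ reduces the boundedness of $T_1$ to a weighted $L^2$ integrability question that can be settled by Schur's test or an explicit $TT^*$ computation. For $T_2$, which involves the positive power $\mathcal{L}_{a,\alpha}^{s/2}$, I would first use the Balakrishnan representation of fractional powers to express $\mathcal{L}_{a,\alpha}^{s/2}$ for $s\in(1,2]$ in terms of resolvents and thus reduce to negative-power kernel bounds of the same type.

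The main obstacle is tracking these origin-concentrated weights through the $t$-integration in the attractive regime $\delta>0$. Their cumulative effect is to insert factors $|x|^{-\delta}$ and $|y|^{-\delta}$ into the effective kernel of $\mathcal{L}_{a,\alpha}^{-s/2}$, and the $L^2$ boundedness of $T_1$ then forces a near-origin integrability condition which, after a Mellin-type computation, reads $\int_0^1 r^{d-1-\alpha s-2\delta}\,dr<\infty$, i.e.\ precisely $s<(d-2\delta)/\alpha$---the sharp threshold appearing in \eqref{eq:12a}. For $T_2$ the weights enter on the favourable side of the kernel comparison, so only the Sobolev restriction $s<d/\alpha$ remains. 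A further delicate point is the critical endpoint $a=a_*$, where $\delta=(d-\alpha)/2$ and the two-sided heat kernel bounds degenerate at the formal ground state $|x|^{-(d-\alpha)/2}$; there one would pre-process the estimate by a ground-state transformation before applying the same integration scheme.
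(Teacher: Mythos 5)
The core gap in your proposal is that you try to prove the $L^2$ boundedness of $T_1 = |p|^{\alpha s/2}\mathcal{L}_{a,\alpha}^{-s/2}$ ``via pointwise estimates for the integral kernel of $\mathcal{L}_{a,\alpha}^{-s/2}$'' and a Schur test or $TT^*$ computation, but this does not go through. The Riesz-type kernel bound you derive for $\mathcal{L}_{a,\alpha}^{-s/2}$ (which is correct, and coincides with Theorem~\ref{Thm:1.4}) is useful for bounding a \emph{composition of $\mathcal{L}_{a,\alpha}^{-s/2}$ with a multiplication operator}, which is why the paper uses it to prove the generalized Hardy inequality $\|\,|x|^{-\alpha s/2}f\|\lesssim\|\mathcal{L}_{a,\alpha}^{s/2}f\|$. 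But $|p|^{\alpha s/2}$ is a \emph{positive} fractional power of $|p|$, a hypersingular operator with no useful positive pointwise kernel, so ``comparing kernels'' with $|p|^{-\alpha s}$ is not a meaningful reduction, and $T_1 T_1^* = |p|^{\alpha s/2}\mathcal{L}_{a,\alpha}^{-s}|p|^{\alpha s/2}$ or $T_1^* T_1 = \mathcal{L}_{a,\alpha}^{-s/2}|p|^{\alpha s}\mathcal{L}_{a,\alpha}^{-s/2}$ still carries the hypersingular factor. The same obstruction appears for $T_2$; there, moreover, the naive attempt to dominate $\mathcal{L}_{a,\alpha}^{s/2}$ by $|p|^{\alpha s/2}$ is doomed when $d/2\leq\alpha<d$, since even for $s=2$ one would need Hardy's inequality with exponent $2\alpha\geq d$, which fails (cf.\ Remark~\ref{rem:hardy}).

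What is missing is the mechanism the paper actually uses to pass from multiplication by $|x|^{-\alpha s/2}$ to the Fourier multiplier $|p|^{\alpha s/2}$, namely the reverse Hardy inequality for differences (Proposition~\ref{reversehardy}): $\|(\mathcal{L}_{a,\alpha}^{s/2}-|p|^{\alpha s/2})f\|_2\lesssim\|\,|x|^{-\alpha s/2}f\|_2$. Once one has this, both parts of the theorem follow by the triangle inequality together with either Proposition~\ref{Prop:1.3} (for part (1)) or the plain Hardy inequality \eqref{eq:hardy} (for part (2)). The proof of Proposition~\ref{reversehardy} is not a Schur test on individual kernels but relies on the representation
\begin{align*}
\mathcal{L}_{a,\alpha}^{s/2}-|p|^{\alpha s/2}
 = -\frac{1}{\Gamma(-s/2)}\int_0^\infty \frac{\dt}{t}\,t^{-s/2}\left(e^{-t|p|^\alpha}-e^{-t\mathcal{L}_{a,\alpha}}\right),
\end{align*}
combined with a bound on the difference of heat kernels $K_t^\alpha = e^{-t|p|^\alpha}-e^{-t\mathcal{L}_{a,\alpha}}$ (Lemma~\ref{Lem:3.4}). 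The essential point there is that in the regime $|x|\sim|y|$ and $t\lesssim|x|^\alpha$ there is a cancellation, visible via Duhamel's formula, which gains a full factor of $|x|^{-\alpha}$ over what either kernel individually would give; without this cancellation the time integral would diverge. Your proposal never touches the difference and hence never sees this cancellation, so the central step of the argument is absent. (Your remark about a ground-state transformation at $a=a_*$ is also a red herring: the heat kernel bounds hold uniformly for $a\geq a_*$, and at $a=a_*$ the admissible range in part (1) collapses to $s<1$, where the result is already trivial by operator monotonicity as in Remark~\ref{ssmall}.)
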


Here and in what follows, we write $X\lesssim Y$ for non-negative quantities $X$ and $Y$, whenever there is a constant $A$ such that $X\leq A\cdot Y$. In order to emphasize that $A$ depends on some parameter $r$, we will sometimes write $X\lesssim_r Y$. Moreover, $X\sim Y$ means $Y\lesssim X\lesssim Y$. In this case, we say $X$ is \emph{equivalent} to $Y$. Moreover, in the paper we use the notation
$$
X\wedge Y:=\min\{X,Y\}
\qquad\text{and}\qquad
X\vee Y:=\max\{X,Y\} \,.
$$

\begin{remark}\label{ssmall}
The crucial point of this theorem is that it allows for some $s>1$ (unless $a=a_*$ in case of \eqref{eq:12a}). The inequalities for $s\leq 1$ can be easily seen as follows. By Hardy's inequality \eqref{eq:hardy} we have the operator inequalities
$$
\left( 1- \frac{a}{a_*} \right) |p|^\alpha \leq \mathcal L_{a,\alpha} \leq |p|^\alpha 
\ \text{if}\ a_*\leq a<0 \,,\quad
|p|^\alpha \leq \mathcal L_{a,\alpha} \leq \left( 1+ \frac{a}{a_*} \right) |p|^\alpha
\ \text{if}\ a>0 \,,
$$
and therefore, by operator monotonicity of roots (see, for instance, \cite[Theorem~2.6]{Carlen2010}), for $0<s<1$,
$$
\left( 1- \frac{a}{a_*} \right)^s\! |p|^{\alpha s} \!\leq \mathcal L_{a,\alpha}^s \leq |p|^{\alpha s} 
\ \text{if}\ a^*\leq a<0 \,,\quad
|p|^{\alpha s} \!\leq \mathcal L_{a,\alpha}^s \leq \!\left( 1+ \frac{a}{a_*} \right)^s |p|^{\alpha s}
\ \text{if}\ a>0 \,.
$$
This proves \eqref{eq:12a} and \eqref{eq:12b} for $0<s\leq 1$.
\end{remark}

\begin{remark}\label{rem:hardy}
Another crucial point of this theorem is that it covers the full range $a\geq a_*$ and $\alpha\leq 2\wedge d$. In fact, as we explain now, if $\alpha<d/2$, then there is a simple proof of a stronger bound than \eqref{eq:12b} and if, in addition, $a$ belongs to a restricted range, then there is also a simple proof of a stronger bound than \eqref{eq:12a}.

Our first claim is that if $\alpha<d/2$, then \eqref{eq:12b} holds for all $s\leq 2$. By the same argument as in the previous remark it suffices to prove this for $s=2$. In that case, we have, by \eqref{eq:hardy},
$$
\norm{\cl_{a,\alpha} f}_{L^2(\R^d)} \leq \norm{|p|^\alpha f}_{L^2(\R^d)} + |a| \norm{|x|^\alpha f}_{L^2(\R^d)}
\leq \left(1+ |a| \mathcal H_{d,2\alpha}^{-1/2} \right) \norm{|p|^\alpha f}_{L^2(\R^d)},
$$
as claimed.

To state our second claim precisely, let $a_{**}:=-\mathcal H_{d,2\alpha}^{1/2}$ (which is well-defined for $\alpha<d/2$ and is negative). We will show below that $a_{**}>a_*$. We claim that for $|a|<| a_{**}|$ inequality \eqref{eq:12b} holds for all $s\leq 2$. Again it suffices to prove this for $s=2$ and then we have, similarly as before,
$$
\norm{\cl_{a,\alpha} f}_{L^2(\R^d)} \geq \norm{|p|^\alpha f}_{L^2(\R^d)} - |a| \norm{|x|^\alpha f}_{L^2(\R^d)}
\geq \left(1- |a| \mathcal H_{d,2\alpha}^{-1/2} \right) \norm{|p|^\alpha f}_{L^2(\R^d)},
$$
as claimed.

Let us finally show that $a_{**}>a_*$. We have, in terms of $\psi=\Gamma'/\Gamma$,
\begin{align*}
\ln |a_*| - \ln |a_{**}| & = 2\ln\Gamma(\tfrac{d+\alpha}4) - 2\ln\Gamma(\tfrac{d-\alpha}4) - \ln\Gamma(\tfrac{d+2\alpha}4) + \ln\Gamma(\tfrac{d-2\alpha}4) \\
& = 2\int_{\tfrac{d-\alpha}4}^{\tfrac{d+\alpha}4} \psi(t) \,dt- \int_{\tfrac{d-2\alpha}4}^{\tfrac{d+2\alpha}4} \psi(t)\,dt \\
& = \int_{\tfrac{d}4}^{\tfrac{d+\alpha}4} (\psi(t) -\psi(t+\tfrac\alpha4))\,dt
- \int_{\tfrac{d-2\alpha}4}^{\tfrac{d-\alpha}4} (\psi(t) -\psi(t+\tfrac\alpha4))\,dt \\
& > 0 \,.
\end{align*}
The last inequality comes from the fact that both integration intervals are of length $\alpha/4$ and that the integrand is an increasing function, which in turn follows from the fact that $\psi'$ is decreasing \cite[(6.4.1)]{Davis1965}.
\end{remark}

\subsection*{Ingredients in the proof}

As a consequence of \eqref{eq:12a} and Hardy's inequality \eqref{eq:hardy} we obtain immediately a Hardy inequality for fractional powers of $\mathcal L_{a,\alpha}$.

\begin{proposition}[Generalized Hardy inequality]
 \label{Prop:1.3}
Let $\alpha\in(0,2\wedge d)$, $a\in[a_*,+\infty)$ and let $\delta$ be defined by \eqref{eq:defdelta}. Then, for any $s\in(0,\frac{d-2\delta}\alpha \wedge \frac{2d}\alpha)$,
 \begin{align}\label{eq:genhardy}
    \norm{\abs x^{-\alpha s/2}f}_2\lesssim_{d,\alpha,a,s} \norm{\cl_{a,\alpha}^{s/2}f}_2 
    \qquad\text{for all}\ f\in C_c^\infty(\R^d) \,.
 \end{align}
 Conversely, if $s\in(0,\frac{2d}\alpha\wedge\frac{2(d-2\delta)}\alpha)$ and if \eqref{eq:genhardy} holds, then $s<\frac{d-2\delta}\alpha$.
\end{proposition}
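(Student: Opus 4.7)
The plan is to chain Theorem~\ref{Thm:1.2}(i) with Hardy's inequality~\eqref{eq:hardy}. The former yields $\||p|^{\alpha s/2}f\|_2 \lesssim \|\cl_{a,\alpha}^{s/2}f\|_2$ for $s\le 2$ with $s<(d-2\delta)/\alpha$; the latter, applied with the parameter $\alpha$ replaced by $\alpha s$ (valid whenever $0<\alpha s<d$), gives $\||x|^{-\alpha s/2}f\|_2\lesssim \||p|^{\alpha s/2}f\|_2$. Concatenation proves~\eqref{eq:genhardy} in the stated range.

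\textbf{Converse.} I would argue by contradiction: assume $s$ lies in the stated range, \eqref{eq:genhardy} holds, but $s\ge (d-2\delta)/\alpha$. The test functions are built from the formal null mode $u_0(x):=|x|^{-\delta}$ of $\cl_{a,\alpha}$; the Riesz-potential identity
\[
|p|^\alpha |x|^{-\delta} = -\Psi_{\alpha,d}(\delta)\,|x|^{-\delta-\alpha},
\]
together with $a=\Psi_{\alpha,d}(\delta)$, gives $\cl_{a,\alpha}u_0=0$ as distributions on $\R^d\setminus\{0\}$. Fix $\eta\in C_c^\infty(\R^d)$ with $\eta(0)=1$ and a smooth radial cutoff $\chi$ vanishing on $(0,1/2]$ and equal to $1$ on $[1,\infty)$, and set $f_\epsilon(x):=\eta(x)\,\chi(|x|/\epsilon)\,|x|^{-\delta}\in C_c^\infty(\R^d)$. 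A polar-coordinate computation yields
\[
\||x|^{-\alpha s/2}f_\epsilon\|_2^2 \gtrsim \int_\epsilon^{c_0} r^{\,d-1-\alpha s -2\delta}\,dr,
\]
which diverges (polynomially, or logarithmically at the threshold) as $\epsilon\to 0$ precisely when $s\ge (d-2\delta)/\alpha$.

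The main technical step --- and the reason for the upper restriction $s<\tfrac{2d}{\alpha}\wedge\tfrac{2(d-2\delta)}{\alpha}$ --- is to show that $\|\cl_{a,\alpha}^{s/2}f_\epsilon\|_2$ stays uniformly bounded as $\epsilon\to 0$. Intuitively, because $\cl_{a,\alpha}$ annihilates $u_0$, the action of $\cl_{a,\alpha}^{s/2}$ on the truncated ground state is controlled only by the smooth cutoffs $\eta$ and $\chi(\cdot/\epsilon)$. Quantitatively, one may use the Balakrishnan-type formula
\[
\cl_{a,\alpha}^{s/2} = C_s \int_0^\infty t^{-1-s/2}\bigl(1-e^{-t\cl_{a,\alpha}}\bigr)\,dt
\]
(for $0<s<2$; otherwise iterate) combined with the sharp heat-kernel bounds for $\cl_{a,\alpha}$ involving the weight $|x|^{-\delta}$ established in~\cite{JakubowskiWang2018}; the stated upper bound on $s$ is precisely what ensures convergence of the resulting time integral at $t\to 0$ and at $t\to\infty$. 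Once the uniform bound is in place, sending $\epsilon\to 0$ contradicts~\eqref{eq:genhardy}, forcing $s<(d-2\delta)/\alpha$. I expect the uniform $L^2$-boundedness of $\cl_{a,\alpha}^{s/2}f_\epsilon$ to be the main obstacle.
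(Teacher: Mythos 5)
Your argument for the direct inequality is circular. The paper explicitly states (just before Proposition~\ref{reversehardy}) that it does \emph{not} deduce \eqref{eq:genhardy} from Theorem~\ref{Thm:1.2}, because Proposition~\ref{Prop:1.3} is one of the two ingredients used to \emph{prove} Theorem~\ref{Thm:1.2}: look at the short ``Proof of Theorem~\ref{Thm:1.2}'' in the introduction, which invokes Propositions~\ref{reversehardy} and~\ref{Prop:1.3}. So you cannot use Theorem~\ref{Thm:1.2}(1) as a black box here without breaking the logical order of the paper. The paper's actual proof of the direct inequality is independent of Theorem~\ref{Thm:1.2}: it observes that \eqref{eq:genhardy} is equivalent to the $L^2$-boundedness of $|x|^{-\alpha s/2}\cl_{a,\alpha}^{-s/2}$, inserts the upper bound on the Riesz kernel from Theorem~\ref{Thm:1.4}, and runs a weighted Schur test on the resulting explicit kernel $|x|^{-\alpha s/2}|x-y|^{\alpha s/2-d}\bigl(1\wedge\frac{|x|}{|x-y|}\wedge\frac{|y|}{|x-y|}\bigr)^{-\delta}$.

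For the converse you take a genuinely different route: truncated approximate null modes $f_\epsilon=\eta\,\chi(\cdot/\epsilon)\,|x|^{-\delta}$ rather than testing the Riesz-kernel lower bound of Theorem~\ref{Thm:1.4} against an $L^2$ bump (which is what the paper does, following \cite{Killipetal2018}). The divergence of $\||x|^{-\alpha s/2}f_\epsilon\|_2$ is fine, but the heart of the matter --- showing $\|\cl_{a,\alpha}^{s/2}f_\epsilon\|_2$ stays uniformly bounded as $\epsilon\to0$ --- is left unproved, and this is not a routine step. The Balakrishnan representation only covers $0<s<2$ directly while the converse is stated up to $\tfrac{2d}{\alpha}\wedge\tfrac{2(d-2\delta)}{\alpha}$, which can exceed $2$; ``otherwise iterate'' would have to be made precise. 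More seriously, $(1-e^{-t\cl_{a,\alpha}})f_\epsilon$ is not simply controlled by the cutoffs: the operator is nonlocal, $u_0\notin L^2$, and the heat-kernel weight $\bigl(1\vee t^{1/\alpha}/|x|\bigr)^\delta$ produces terms near the origin whose $t$-integral must be shown to converge uniformly in $\epsilon$ exactly at the endpoint $s=(d-2\delta)/\alpha$. You acknowledge this as the main obstacle, but without closing it the proof is incomplete; the paper avoids all of this by reading the failure of boundedness directly off the Riesz-kernel lower bound, which it has already established.
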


In fact, our method of proof will be different. We will not deduce \eqref{eq:genhardy} from Theorem~\ref{Thm:1.2}, but rather \eqref{eq:genhardy} will be proved first and will be an important ingredient in the proof of Theorem \ref{Thm:1.2}. Another ingredient in the proof of Theorem \ref{Thm:1.2} will be the following inequality which gives a \emph{lower} bound on the norm of $|x|^{-\alpha s/2} f$ in terms of the difference $\left(\mathcal L_{a,\alpha}^{s/2} - |p|^{\alpha s/2} \right) f$. We have not seen similar inequalities before, not even in the case $\alpha=2$ (although they can probably be extracted from the bounds on the difference of square functions in \cite{Killipetal2018}).

\begin{proposition}[Reverse Hardy inequality for differences]
\label{reversehardy}
Let $\alpha\in(0,2\wedge d)$, $a\in[a_*,+\infty)$. Then, for any $s\in(0,2]$,
$$
\left\| \left(\mathcal L_{a,\alpha}^{s/2} - |p|^{\alpha s/2} \right) f \right\|_2 \lesssim_{d,\alpha,a,s} \norm{\abs x^{-\alpha s/2}f}_2
\qquad\text{for all}\ f\in C_c^\infty(\R^d) \,.
$$
\end{proposition}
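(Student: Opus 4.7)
For $s=2$ the claim is immediate from $\mathcal L_{a,\alpha} - |p|^\alpha = a|x|^{-\alpha}$, which gives equality (up to the factor $|a|$) with $\||x|^{-\alpha}f\|_2$. For $s\in(0,2)$ my plan is to start from the integral representation
\[
\mathcal L_{a,\alpha}^{s/2} - |p|^{\alpha s/2} = \frac{a\sin(\pi s/2)}{\pi}\int_0^\infty t^{s/2}(\mathcal L_{a,\alpha}+t)^{-1}|x|^{-\alpha}(|p|^\alpha+t)^{-1}\,\dt,
\]
obtained from the functional-calculus identity $\lambda^{s/2} = \tfrac{\sin(\pi s/2)}{\pi}\int_0^\infty t^{s/2-1}\lambda/(\lambda+t)\,\dt$ applied separately to $\mathcal L_{a,\alpha}$ and $|p|^\alpha$, combined with the resolvent identity that turns the difference of resolvents into $a(|p|^\alpha+t)^{-1}|x|^{-\alpha}(\mathcal L_{a,\alpha}+t)^{-1}$.

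Pairing against $g\in L^2(\R^d)$ of unit norm, splitting $|x|^{-\alpha}=|x|^{-\alpha\beta}|x|^{-\alpha(1-\beta)}$ for a parameter $\beta\in(0,1)$, and applying Cauchy--Schwarz in $x$ and then in $t$ (with weight split $t^{s/2}=t^{a_1}\cdot t^{s/2-a_1}$) would reduce the claim to proving
\[
I_1(g):=\int_0^\infty t^{2a_1}\||x|^{-\alpha\beta}(\mathcal L_{a,\alpha}+t)^{-1}g\|_2^2\,\dt\lesssim\|g\|_2^2
\]
and
\[
I_2(f):=\int_0^\infty t^{s-2a_1}\||x|^{-\alpha(1-\beta)}(|p|^\alpha+t)^{-1}f\|_2^2\,\dt\lesssim\||x|^{-\alpha s/2}f\|_2^2,
\]
where scale-invariance forces the relation $a_1+\beta=\tfrac12$. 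The bound on $I_1$ comes from the generalized Hardy inequality (Proposition~\ref{Prop:1.3}) applied to $\mathcal L_{a,\alpha}$ in the form $\||x|^{-\alpha\beta}h\|_2\lesssim\|\mathcal L_{a,\alpha}^{\beta}h\|_2$ (which holds whenever $2\beta<(d-2\delta)/\alpha\wedge 2d/\alpha$), followed by the spectral identity $\int_0^\infty t^{2a_1}\lambda^{2\beta}/(\lambda+t)^2\,\dt=C\lambda^{2\beta+2a_1-1}=C$, valid because of $a_1+\beta=\tfrac12$.

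The main obstacle is the bound on $I_2$. A naive application of the standard Hardy inequality to replace $|x|^{-\alpha(1-\beta)}$ by $|p|^{\alpha(1-\beta)}$ yields only a bound in terms of $\||p|^{\alpha s/2}f\|_2^2$, which is strictly larger than the desired $\||x|^{-\alpha s/2}f\|_2^2$ (since reverse Hardy fails); hence an honest weighted-multiplier estimate for $(|p|^\alpha+t)^{-1}$ is needed, presumably via explicit Fourier/kernel analysis of the Bessel-type kernel of $(|p|^\alpha+t)^{-1}$ combined with a Stein--Weiss fractional-integration inequality. An attractive alternative is to bypass the Cauchy--Schwarz reduction entirely and use Stein--Hirschman complex interpolation on the analytic family $G_z:=(\mathcal L_{a,\alpha}^{z/2}-|p|^{\alpha z/2})|x|^{\alpha z/2}$ on the strip $0\le\Re z\le 2$, exploiting the exact endpoints $G_0=0$ and $G_2=a\cdot\mathbf 1$; this requires $L^2\to L^2$ bounds with admissible growth in $\Im z$ for $G_{iy}$ (immediate by unitarity) and for $G_{2+iy}$, the latter reducing via the decomposition $\mathcal L_{a,\alpha}^{1+iy/2}-|p|^{\alpha(1+iy/2)}=a|x|^{-\alpha}\mathcal L_{a,\alpha}^{iy/2}+|p|^\alpha(\mathcal L_{a,\alpha}^{iy/2}-|p|^{\alpha iy/2})$ to bounds on imaginary powers of $\mathcal L_{a,\alpha}$ in the Muckenhoupt weight $|x|^{-2\alpha}$ (accessible from the heat kernel estimates of \cite{JakubowskiWang2018} combined with Calder\'on--Zygmund theory, provided $\alpha<d/2$). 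Finally, the critical coupling $a=a_*$, at which Proposition~\ref{Prop:1.3} degenerates at exponent $s=1$, should be handled by first proving the estimate with constants uniform in $a\in(a_*,0)$ and then passing to the limit $a\downarrow a_*$.
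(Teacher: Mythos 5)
Your proposal takes a genuinely different route from the paper, which works directly with the heat kernel difference $K_t^\alpha = e^{-t|p|^\alpha} - e^{-t\mathcal L_{a,\alpha}}$, establishes the pointwise cancellation bound $|K_t^\alpha|\lesssim L_t^\alpha + M_t^\alpha$ via Duhamel's formula and the two-sided heat kernel estimates (Lemma~\ref{Lem:3.4}), and then concludes with Schur tests. Your Balakrishnan-plus-resolvent-identity reduction is algebraically correct, and the scaling relation $a_1+\beta=\tfrac12$ together with the admissible window $0<\beta<\min\{1-\tfrac{s}{2},\tfrac{d-2\delta}{2\alpha},\tfrac{d}{\alpha}\}$ makes $I_1(g)\lesssim\|g\|_2^2$ work exactly as you describe, via Proposition~\ref{Prop:1.3} and the spectral calculus identity.

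However, there is a genuine gap: the bound $I_2(f)\lesssim\||x|^{-\alpha s/2}f\|_2^2$ is the entire content of the proposition in disguise, and you leave it at the level of a plan. Note that the $t=0$ endpoint of the kernel of $|x|^{-\alpha(1-\beta)}(|p|^\alpha+t)^{-1}|x|^{\alpha s/2}$ has homogeneity exponent $\alpha(1-\beta)-\alpha s/2+(d-\alpha)=d-\alpha(\beta+\tfrac{s}{2})\neq d$, so it is \emph{not} a scale-invariant Stein--Weiss kernel; a direct Stein--Weiss inequality is not applicable and the $\dt$-integration is essential. What is actually needed is a weighted vertical square-function estimate for the resolvent of $|p|^\alpha$, which is not obviously easier than the statement being proved. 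Your alternative via Stein--Hirschman interpolation has two further problems: (i) the endpoint analysis of $G_{2+iy}$ rests on weighted $L^2$ bounds for imaginary powers with weight $|x|^{-2\alpha}$, and $|x|^{-2\alpha}\in A_2(\R^d)$ only when $\alpha<d/2$, which does not cover the full range $\alpha<2\wedge d$ (e.g. $d=1,2,3$); and (ii) the factor $|p|^\alpha\bigl(\mathcal L_{a,\alpha}^{iy/2}-|p|^{\alpha iy/2}\bigr)$ appearing in your decomposition of $G_{2+iy}$ is not controlled merely by the unitarity of imaginary powers. Finally, passing to $a\downarrow a_*$ by a limiting argument requires uniformity of the constants in $a$, which neither your argument for $I_1$ (it invokes Proposition~\ref{Prop:1.3}, whose constant is allowed to depend on $a$) nor the interpolation alternative delivers. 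In short, the reduction is sound but the central estimate remains unproved; by contrast, the paper's route replaces it by the elementary but decisive pointwise cancellation Lemma~\ref{Lem:3.4}, after which the Schur tests close the argument without any hidden hard estimate.
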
    

Before proceeding, let us show that Theorem \ref{Thm:1.2} is an immediate consequence of Propositions \ref{Prop:1.3} and \ref{reversehardy}.

\begin{proof}[Proof of Theorem \ref{Thm:1.2}]
If $s<\frac{d-2\delta}{\alpha}$, we obtain by Propositions \ref{reversehardy} and \ref{Prop:1.3}
\begin{align*}
\norm{\abs p^{\alpha\frac{s}{2}}f}_{L^2(\R^d)}
& \leq \norm{\cl_{a,\alpha}^{s/2}f}_{L^2(\R^d)} + \left\| \left(\mathcal L_{a,\alpha}^{s/2} - |p|^{\alpha s/2} \right) f \right\|_2 \\
& \lesssim \norm{\cl_{a,\alpha}^{s/2}f}_{L^2(\R^d)} + \norm{\abs x^{-\alpha s/2}f}_2 \\
& \lesssim \norm{\cl_{a,\alpha}^{s/2}f}_{L^2(\R^d)} \,.
\end{align*}
This proves (1). (Note that the assumption $s<\frac{2d}{\alpha}$ in Proposition \ref{Prop:1.3} follows from $s\leq 2$ and $\alpha<d$.) If $s<\frac{d}{\alpha}$, we argue similarly, but with the Hardy inequality \eqref{eq:hardy} (with $\alpha s$ instead of $\alpha$) instead of Proposition \ref{Prop:1.3}. This proves (2).
\end{proof}

Thus, we have reduced the proof of Theorem \ref{Thm:1.2} to that of Propositions \ref{Prop:1.3} and \ref{reversehardy}. Let us explain some ideas involved in those proofs. The starting point of the analysis in \cite{Killipetal2018} for the case $\alpha=2$ (and that in \cite{Killipetal2016}) are two-sided bounds on the heat kernel of $\mathcal L_{a,2}$. Correspondingly, we rely here on very recent two-sided bounds on the heat kernel of $\mathcal L_{a,\alpha}$ for $0<\alpha<2$ from \cite{Bogdanetal2019,Choetal2018,JakubowskiWang2018}. Given these bounds on the heat kernel of $\mathcal L_{a,\alpha}$ it is a simple matter to obtain bounds on the corresponding Riesz kernels, i.e., the integral kernels of the operators $\mathcal L_{a,\alpha}^{-s/2}$. We state them explicitly as follows.

\begin{theorem}[Riesz kernels of generalized Hardy operators]
  \label{Thm:1.4}
  Let $\alpha\in(0,2\wedge d)$, $a\in [a_*,\infty)$ and let $\delta$ be defined by \eqref{eq:defdelta}. If $s\in(0,\frac{2d}\alpha\wedge\frac{2(d-2\delta)}\alpha)$, then
  \begin{align}
   \cl_{a,\alpha}^{-s/2}(x,y) \sim_{d,\alpha,a,s} \abs{x-y}^{\alpha\frac{s}{2}-d}\left(1\wedge\frac{\abs x}{\abs{x-y}}
        \wedge\frac{\abs y}{\abs{x-y}}\right)^{-\delta} \,.
  \end{align}
\end{theorem}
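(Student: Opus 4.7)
The plan is to combine two ingredients. First, the subordination identity
$$
\cl_{a,\alpha}^{-s/2}(x,y) = \frac{1}{\Gamma(s/2)} \int_0^\infty t^{s/2-1}\, p_t^{a,\alpha}(x,y)\, \dt,
$$
which reduces matters to a Mellin-type moment of the heat kernel $p_t^{a,\alpha}$ of $\cl_{a,\alpha}$. Second, the recently established two-sided heat kernel bound for fractional Schr\"odinger operators with Hardy potential (from \cite{Bogdanetal2019,Choetal2018,JakubowskiWang2018}):
$$
p_t^{a,\alpha}(x,y) \sim_{d,\alpha,a} \left( 1 \vee \frac{t^{1/\alpha}}{|x|} \right)^{\!\delta} \left( 1 \vee \frac{t^{1/\alpha}}{|y|} \right)^{\!\delta} \left( t^{-d/\alpha} \wedge \frac{t}{|x-y|^{d+\alpha}} \right).
$$
I would insert the second bound into the first formula and push the $\sim$-bound through the integration (which is legitimate because the bound is pointwise and the integrand is non-negative).

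The key step is to rescale $u = t/|x-y|^\alpha$, which extracts the expected homogeneous prefactor $|x-y|^{\alpha s/2 - d}$ and leaves a dimensionless integral
$$
I(\rho_x,\rho_y) := \int_0^\infty u^{s/2-1} \left(1 \vee \frac{u^{1/\alpha}}{\rho_x}\right)^{\!\delta} \left(1 \vee \frac{u^{1/\alpha}}{\rho_y}\right)^{\!\delta} \left( u^{-d/\alpha} \wedge u \right) \du
$$
in the two dimensionless ratios $\rho_x := |x|/|x-y|$ and $\rho_y := |y|/|x-y|$. Its integrand has natural transitions at $u = \rho_x^\alpha$, $u = \rho_y^\alpha$, and $u = 1$ (where the free-kernel factor switches from $u$ to $u^{-d/\alpha}$). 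Assuming $\rho_x \leq \rho_y$ without loss of generality, I would split the $u$-axis into the resulting sub-intervals, which produces three geometric sub-cases ($\rho_y \leq 1$; $\rho_x \leq 1 \leq \rho_y$; $\rho_x \geq 1$), evaluate the elementary power integrals on each piece, and sum. The expected outcome is $I(\rho_x,\rho_y) \sim (1 \wedge \rho_x \wedge \rho_y)^{-\delta}$, which combined with the prefactor is the claim.

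Convergence of the $u$-integral at $0$ is automatic (the integrand behaves like $u^{s/2}$). The constraint $s < 2(d-2\delta)/\alpha$ enters through the tail $u \to \infty$, where both Hardy factors are active and the integrand is $\sim u^{s/2-1-d/\alpha+2\delta/\alpha}\,(\rho_x\rho_y)^{-\delta}$. The second constraint $s < 2d/\alpha$ is invoked in the intermediate regime $1 \leq u \leq \rho_x^\alpha \wedge \rho_y^\alpha$ (present when $\rho_x \wedge \rho_y \geq 1$), where the Hardy factors are still trivial but $u^{-d/\alpha} \wedge u = u^{-d/\alpha}$: without $s < 2d/\alpha$ this piece would grow in $\rho_x \wedge \rho_y$ and spoil the required bound $(1 \wedge \rho_x \wedge \rho_y)^{-\delta} \sim 1$ in that regime. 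The hard part will not be the strategy but the case analysis itself: identifying the dominant sub-interval in each configuration of $(\rho_x,\rho_y)$, verifying that subleading contributions are absorbed, and making sure that no borderline exponent produces logarithmic enhancements that would disrupt the clean power-law conclusion. The strict inequalities on $s$ should suffice to avoid any such resonance.
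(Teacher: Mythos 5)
Your proposal takes essentially the same route as the paper's proof: subordination formula for $\cl_{a,\alpha}^{-s/2}$, insertion of the two-sided heat kernel bound from \cite{Bogdanetal2019,Choetal2018,JakubowskiWang2018}, rescaling $t\mapsto t\,|x-y|^\alpha$ to extract the factor $|x-y|^{\alpha s/2-d}$, and a case analysis of the remaining dimensionless integral in $\rho_x=|x|/|x-y|$, $\rho_y=|y|/|x-y|$. You also correctly pinpoint where each hypothesis on $s$ is used: $s<2(d-2\delta)/\alpha$ for the tail $u\to\infty$, and $s<2d/\alpha$ in the intermediate regime where the Hardy factors are still trivial but the free-kernel factor has switched to $u^{-d/\alpha}$. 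The only cosmetic difference is the case split: the paper uses a threshold $|x-y|\lessgtr 4|x|$ (chosen so that, after WLOG $|x|\leq|y|$, one always has either $|x|\sim|y|$ or $|y|\sim|x-y|$), while you split at $\rho_x,\rho_y\lessgtr 1$; since $\rho_x+\rho_y\geq 1$ forces $\rho_y\geq 1/2$ when $\rho_x\leq\rho_y$, these are equivalent up to constants. What remains is the elementary power-integral bookkeeping you defer, which is carried out in full in the paper and presents no obstacle given the strict inequalities on $s$.
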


One of the applications of this theorem is, for instance, a proof of Proposition \ref{Prop:1.3}.

\begin{proof}[Proof of Proposition \ref{Prop:1.3}]
  Since $C_c^\infty(\R^d)\subset\dom(\cl_{a,\alpha})^{s/2}$ (which is easy to see; cf.\ the proof of \cite[Lemma~15]{FrankMerz2023}), the assertion follows from the $L^2$-boundedness of the operator $|x|^{-\alpha s/2} \mathcal L_{a,\alpha}^{-s/2}$. Using the upper bounds in Theorem \ref{Thm:1.4} this follows from the $L^2$-boundedness of the operator with integral kernel
  $$\abs x^{-\alpha\frac{s}{2}}\abs{x-y}^{\alpha\frac{s}{2}-d}
  \left(1\wedge\frac{\abs x}{\abs{x-y}}\wedge\frac{\abs y}{\abs{x-y}}\right)^{-\delta}.$$
  This follows in a straightforward way by a Schur test. Since the same argument appears already in \cite[Proposition 3.2]{Killipetal2018} (with $s$ in place of $\alpha s/2$ and $\sigma$ in place of $\delta$) and since we will perform similar Schur tests also later on in this paper, we omit the details here.
  
  The fact that \eqref{eq:genhardy} fails for $s\in[\frac{d-2\delta}{\alpha},\frac{2d}{\alpha}\wedge\frac{2(d-2\delta)}{\alpha})$ follows from the lower bound in Theorem \ref{Thm:1.4} by the same argument as in \cite[Proposition 3.2]{Killipetal2018}.
\end{proof}

This reduces the proof of Proposition \ref{Prop:1.3} to that of Theorem \ref{Thm:1.4}, which will be given in Section \ref{sec:riesz}.

While the proof of Proposition \ref{Prop:1.3} and that of Theorem \ref{Thm:1.4} rely on bounds on the individual heat kernel of $\mathcal L_{a,\alpha}$, the proof of Proposition \ref{reversehardy} relies on bounds on the difference between the heat kernels of $\mathcal L_{a,\alpha}$ and $|p|^\alpha$ and, in particular, on cancellations between both. We do not discuss these bounds in this introduction, but refer to Section \ref{sec:difference} and, in particular, to Lemma \ref{Lem:3.4}. We would like to stress here, however, that the Gaussian off-diagonal decay of the heat kernel for $\alpha=2$ is replaced by some algebraic decay for $0<\alpha<2$. Therefore the corresponding bounds are more involved than those in \cite{Killipetal2018}.

\subsection*{Applications}

We would like to end this introduction with several applications of our main result, Theorem \ref{Thm:1.2}. The most straightforward one is the following Sobolev inequality for the operator $\mathcal L_{a,\alpha}$. It is an immediate consequence of Theorem \ref{Thm:1.2} and the usual Sobolev inequality.

\begin{corollary}
Let $\alpha\in(0,2\wedge d)$, $a\in [a_*,\infty)$ and let $\delta$ be defined by \eqref{eq:defdelta}. Then, for any $s\in(0,2]$ with $s<\frac{d}{\alpha}$ if $a\geq 0$ and with $s<\frac{d-2\delta}{\alpha}$ if $a<0$,
    \begin{align}
     \left\| f \right\|_{L^\frac{2d}{d-\alpha s}(\R^d)} \lesssim_{d,\alpha,a,s} \norm{\cl_{a,\alpha}^{\frac{s}{2}}f}_{L^2(\R^d)}
    \text{ for all } f\in C_c^\infty(\R^d) \,.
  \end{align}
\end{corollary}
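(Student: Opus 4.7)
The strategy is to chain two inequalities: first pass from $\|f\|_{L^{2d/(d-\alpha s)}}$ to $\||p|^{\alpha s/2} f\|_{L^2}$ via the standard fractional Sobolev inequality, and then pass from $\||p|^{\alpha s/2}f\|_{L^2}$ to $\|\mathcal L_{a,\alpha}^{s/2}f\|_{L^2}$ via Theorem \ref{Thm:1.2}~(1). The only real task is to check that the hypothesis of Theorem \ref{Thm:1.2}~(1), namely $s<(d-2\delta)/\alpha$, is met under the two sign-dependent assumptions on $s$ made in the corollary.

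The first step is to recall the fractional Sobolev embedding
\[
\|f\|_{L^{2d/(d-\alpha s)}(\R^d)} \lesssim_{d,\alpha,s} \||p|^{\alpha s/2} f\|_{L^2(\R^d)}\,,
\]
which is valid exactly when $0<\alpha s<d$; since we assume $s<d/\alpha$ in the case $a\geq 0$ and $s<(d-2\delta)/\alpha\leq d/\alpha$ in the case $a<0$ (the last inequality because $\delta\geq 0$ then, as will be verified below), this condition is fulfilled in both regimes.

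The second step is to check the threshold in Theorem \ref{Thm:1.2}~(1). Recall that $\delta=\Psi_{\alpha,d}^{-1}(a)$ and that $\Psi_{\alpha,d}$ is continuous and strictly decreasing with $\Psi_{\alpha,d}(0)=0$ and $\Psi_{\alpha,d}((d-\alpha)/2)=a_*$. Hence $\delta\leq 0$ iff $a\geq 0$ and $\delta\in(0,(d-\alpha)/2]$ iff $a\in[a_*,0)$. Consequently, when $a\geq 0$ the assumption $s<d/\alpha$ implies $s<d/\alpha\leq (d-2\delta)/\alpha$, while when $a<0$ the assumption is exactly $s<(d-2\delta)/\alpha$. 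In either case the hypothesis of Theorem \ref{Thm:1.2}~(1) holds.

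Combining the two bounds yields, for every $f\in C_c^\infty(\R^d)$,
\[
\|f\|_{L^{2d/(d-\alpha s)}(\R^d)} \lesssim_{d,\alpha,s} \||p|^{\alpha s/2}f\|_{L^2(\R^d)} \lesssim_{d,\alpha,a,s} \|\mathcal L_{a,\alpha}^{s/2}f\|_{L^2(\R^d)}\,,
\]
which is the claimed inequality. There is no genuine obstacle here beyond keeping track of the case distinction on the sign of $a$ and the corresponding sign of $\delta$; all analytical content has been packaged into Theorem \ref{Thm:1.2} and the classical Sobolev inequality.
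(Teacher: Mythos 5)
Your proof is correct and matches the paper's (unstated) argument: the corollary is labeled as an immediate consequence of Theorem \ref{Thm:1.2}(1) and the classical fractional Sobolev inequality, which is precisely the chain you carry out, with the sign analysis of $\delta$ via monotonicity of $\Psi_{\alpha,d}$ being the only bookkeeping required.
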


Our main motivation for Theorem \ref{Thm:1.2} comes from our forthcoming paper \cite{Franketal2020P} where we describe the ground state density of a large atom with relativistic effects taken into account near the nucleus. This problem was solved before in the non-relativistic case in \cite{Iantchenkoetal1996}. The relativistic case is substantially more difficult since the kinetic energy $\sqrt{|p|^2+m^2}-m$ and the potential energy $|x|^{-1}$ show for large $|p|$ the same scaling behavior. Therefore the latter one cannot be treated as a perturbation and the Sobolev spaces associated to $\mathcal L_{a,1}$ appear naturally. While we will eventually show that the problem under consideration is dominated by the small $|p|$ behavior, Theorem \ref{Thm:1.2} yields the crucial a-priori bound which controls the worst case behavior coming from large $|p|$. On a more technical level, since we are dealing with a many-body problem, we need a certain trace ideal inequality for the operator $\mathcal L_{a,1}$. Theorem~\ref{Thm:1.2} allows us to deduce such an inequality from the corresponding inequality for $|p|$. For further details, we refer to \cite{Franketal2020P}.

Our third application comes from an alternative description of an atom in the presence of relativistic effects, namely by the Coulomb--Dirac operator
\begin{align}
  D^\nu:=-i\balpha\cdot\nabla-\frac{\nu}{\abs x}
\end{align}
in $L^2(\R^3:\C^4)$ with $\balpha=(\alpha_1,\alpha_2,\alpha_3)$ being the triplet of Dirac matrices. This operator is essentially self-adjoint on
$C_c^\infty(\R^3\setminus\{0\}:\C^4)$ for $\abs\nu\leq\frac{\sqrt3}{2}$, while for $\nu\in(\frac{\sqrt3}{2},1]$ there is a distinguished choice of a self-adjoint
extension. We refer to \cite{EstebanLoss2007} for more on this and for references. In the following we always mean this operator when we write $D^\nu$. A fundamental  difference between $D^\nu$ and $\mathcal L_{a,1}$ is that the former is not bounded from below. Nevertheless, based on results of \cite{MorozovMueller2017}, we will be able to prove bounds analogous to those in Theorem~\ref{Thm:1.2}.

\begin{corollary}
Let $\nu\in(0,1]$.
\begin{enumerate}
\item If $s<1+2\sqrt{1-\nu^2}$ and $s\leq 2$, then
  \begin{align}\label{eq:dirac}
    \norm{\abs p^{\frac{s}{2}}f}_{L^2}\lesssim_{\nu,s} \norm{\abs{D^\nu}^{\frac{s}{2}}f}_{L^2}\text{ for all }f\in C_c^\infty(\R^3:\C^4) \,.
  \end{align}
\item If $s\leq 2$, then
  \begin{align}\label{eq:dirac2}
    \norm{\abs{D^\nu}^{\frac{s}{2}}f}_{L^2}\lesssim_{\nu,s} \norm{\abs p^{\frac{s}{2}}f}_{L^2}\text{ for all }f\in C_c^\infty(\R^3:\C^4).
  \end{align}
\end{enumerate}
\end{corollary}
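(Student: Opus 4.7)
The plan is to reduce the corollary to Theorem~\ref{Thm:1.2} by comparing $|D^\nu|$ with a generalized Hardy operator of the type studied in this paper. The main input would be \cite{MorozovMueller2017}, which describes the operator domain of the distinguished self-adjoint realisation of $D^\nu$ and yields, for a unique coupling constant $a(\nu) \in [a_*, 0]$, the equivalence
\begin{equation*}
\bigl\| |D^\nu| f \bigr\|_{L^2} \sim_\nu \bigl\| \mathcal{L}_{a(\nu), 1} f \bigr\|_{L^2}
\end{equation*}
on the common domain. The exponent associated to $a(\nu)$ via \eqref{eq:defdelta} is $\delta(\nu) := \Psi_{1, 3}^{-1}(a(\nu)) = 1 - \sqrt{1 - \nu^2}$, reflecting the short-distance $|x|^{-1 + \sqrt{1 - \nu^2}}$ behaviour of eigenfunctions of $D^\nu$; in the critical case $\nu = 1$ one has $a(\nu) = a_*$.

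Next I would raise this equivalence, which is nothing but the operator inequality $c_1\, \mathcal{L}_{a(\nu), 1}^2 \leq |D^\nu|^2 \leq c_2\, \mathcal{L}_{a(\nu), 1}^2$, to fractional powers. By the L\"owner--Heinz theorem the map $t \mapsto t^r$ is operator monotone on $[0, \infty)$ for every $r \in [0, 1]$, so with $r = s / 2$ and $s \in (0, 2]$ one obtains $c_1^{s/2}\, \mathcal{L}_{a(\nu), 1}^s \leq |D^\nu|^s \leq c_2^{s/2}\, \mathcal{L}_{a(\nu), 1}^s$. Reading this as a quadratic form inequality gives
\begin{equation*}
\bigl\| |D^\nu|^{s/2} f \bigr\|_{L^2} \sim_{\nu, s} \bigl\| \mathcal{L}_{a(\nu), 1}^{s/2} f \bigr\|_{L^2}, \qquad s \in (0, 2] .
\end{equation*}

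It would then remain to invoke Theorem~\ref{Thm:1.2} with $d = 3$, $\alpha = 1$, $a = a(\nu)$ and $\delta = 1 - \sqrt{1 - \nu^2}$. The upper bound \eqref{eq:12b} applies whenever $s < d / \alpha = 3$, hence for every $s \leq 2$, and combined with the equivalence above yields \eqref{eq:dirac2}. The lower bound \eqref{eq:12a} applies whenever $s < (d - 2\delta) / \alpha = 1 + 2\sqrt{1 - \nu^2}$, which is precisely the hypothesis of part (1), and similarly yields \eqref{eq:dirac}.

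The main obstacle is the first step: extracting from \cite{MorozovMueller2017} the sharp value of $a(\nu)$, verifying that the associated exponent is indeed $1 - \sqrt{1 - \nu^2}$, and making the equivalence robust up to the critical coupling $\nu = 1$, where $a(\nu) = a_*$ and the distinguished self-adjoint extension of $D^\nu$ has a form domain strictly larger than the closure of $C_c^\infty(\R^3 \setminus \{0\} : \C^4)$. A subsidiary technical point, relevant for $\nu \in (\sqrt 3 / 2, 1]$, is to extend the inequalities from this natural form domain to all of $C_c^\infty(\R^3 : \C^4)$ by a standard cut-off approximation near the origin.
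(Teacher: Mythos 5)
Your route deviates from the paper's in one decisive respect, and the deviation is a gap: you posit a \emph{two-sided} operator equivalence $\bigl\| |D^\nu| f\bigr\|_{L^2} \sim_\nu \bigl\|\mathcal L_{a(\nu),1} f\bigr\|_{L^2}$ coming from \cite{MorozovMueller2017}, but that reference (whose title explicitly says ``lower bounds'') only provides the one-sided operator inequality $(D^\nu)^2 \gtrsim_\nu \mathcal L_{a(\nu),1}^2\otimes\one_{\C^4}$, extracted from their Lemma IV.4 on the zeroth angular momentum channel and Lemma IV.5 on its orthogonal complement. The reverse inequality $(D^\nu)^2 \lesssim \mathcal L_{a(\nu),1}^2\otimes\one_{\C^4}$ is nowhere established, and it is unclear (especially at the critical coupling $\nu=1$, where $a(\nu)=a_*$ and $\delta=1$) that it holds with a finite constant; since $a(\nu)\in[a_*,0)$ is negative, the easier bound $(D^\nu)^2\leq A_\nu(-\Delta)\otimes\one_{\C^4}$ does \emph{not} yield it, because $\mathcal L_{a,1}\leq |p|$ (not $\geq$) in the attractive case. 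So your first step needs a proof that \cite{MorozovMueller2017} does not supply.

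Fortunately the two-sided bound is also unnecessary. For part (1) the single lower bound $(D^\nu)^2\geq A_\nu\,\mathcal L_{a(\nu),1}^2\otimes\one_{\C^4}$ suffices: L\"owner--Heinz gives $|D^\nu|^{s}\geq A_\nu^{s/2}\,\mathcal L_{a(\nu),1}^{s}\otimes\one_{\C^4}$ for $s\leq 2$, and then \eqref{eq:12a} with $d=3$, $\alpha=1$, $a=a(\nu)=\Psi_{1,3}(1-\sqrt{1-\nu^2})$ (so $\delta=1-\sqrt{1-\nu^2}$) yields \eqref{eq:dirac} for $s<1+2\sqrt{1-\nu^2}$, exactly as you planned. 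For part (2), bypass $\mathcal L_{a,1}$ entirely: $(-i\balpha\cdot\nabla)^2=-\Delta\otimes\one_{\C^4}$ together with Cauchy--Schwarz and the Hardy inequality \eqref{eq:hardy} (with $d=3$, $\alpha=2$) give $(D^\nu)^2\leq A_\nu(-\Delta)\otimes\one_{\C^4}$; raising to the power $s/2\leq 1$ gives \eqref{eq:dirac2} directly, without any appeal to Theorem~\ref{Thm:1.2}. Your explicit identification $\delta(\nu)=1-\sqrt{1-\nu^2}$ and the corresponding coupling $a(\nu)$ are correct, and your observation about the threshold $s<(d-2\delta)/\alpha=1+2\sqrt{1-\nu^2}$ matches the paper; the only correction needed is to drop the unjustified upper comparison with $\mathcal L_{a,1}$ and use the two one-sided bounds separately.
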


\begin{proof}
We begin with the proof of the simpler second part, which does not rely on our main theorem and uses the same idea as in Remark \ref{rem:hardy}. We recall that $(-i\balpha\cdot\nabla)^2 = -\Delta\otimes\one_{\C^4}$. Hence by the Cauchy--Schwarz inequality and by Hardy's inequality \eqref{eq:hardy} with $d=3$ and $\alpha=2$, we find
$$
(D^\nu)^2\leq A_\nu\ (-\Delta)\otimes\one_{\C^4} \,.
$$
By operator monotonicity as in Remark \ref{ssmall} we conclude that for any $0<t\leq 1$
$$
|D^\nu|^{2t}\leq A_\nu^t\ |p|^{2t} \otimes\one_{\C^4} \,.
$$
This is \eqref{eq:dirac2} with $s=2t$.

We now turn to the significantly more difficult part of proving \eqref{eq:dirac}. It is shown in \cite{MorozovMueller2017} that
$$
(D^\nu)^2\geq A_\nu \ \mathcal L_{a,1}^2\otimes\one_{\C^4} \,.
$$
with 
$$
a= -\sqrt{1-\nu^2}\, \cot\left(\frac{\pi}{2}\sqrt{1-\nu^2}\right)
\ \text{if}\ \nu<1 \,,
\qquad
a=-\frac{2}{\pi}
\ \text{if}\ \nu=1 \,.
$$
This bound is not stated explicitly in \cite{MorozovMueller2017}, but can be easily obtained from their results. Namely, this bound restricted to the zeroth angular momentum channel is equivalent to their Lemma \rom{4}.4, whereas their Lemma \rom{4}.5 says that on the orthogonal complement of this channel, $(D^\nu)^2$ can, in fact, be bounded from below by a constant times $-\Delta\otimes\one_{\C^4}$, which in turn (by Cauchy--Schwarz and Hardy, as before) can be bounded from below by $\mathcal L_{a,1}^2\otimes\one_{\C^4}$. Again by operator monotonicity we conclude that for any $0<t<1$
$$
|D^\nu|^{2t} \geq A_\nu^t \ \mathcal L_{a,1}^{2t}\otimes\one_{\C^4} \,.
$$
Noting that $a=\Psi_{1,3}(1-\sqrt{1-\nu^2})$ we see that Theorem \ref{Thm:1.2} with $s=2t$ implies the assertion.
\end{proof}

%%%%%%%%%%%%%%%%%%

\section{Heat and Riesz kernels}\label{sec:riesz}

Our goal in this section is to prove Theorem \ref{Thm:1.4}. We begin by recalling the recent two-sided bounds on the heat kernel of $\cl_{a,\alpha}$ from \cite{Bogdanetal2019,Choetal2018,JakubowskiWang2018}. The results in the (much simpler) case $a=0$ are classical \cite{BlumenthalGetoor1960}. We also note that in the special case $a=0$ and $\alpha=1$ an explicit formula for $\me{-t\abs p^\alpha}$ is available, see, for instance, \cite[Theorem 1.14]{SteinWeiss1971}.

\begin{theorem}[Heat kernels of generalized Hardy operators]
  \label{Thm:2.1}
Let $\alpha\in(0,2\wedge d)$, $a\in [a_*,\infty)$ and let $\delta$ be defined by \eqref{eq:defdelta}. Then the heat kernel of $\cl_{a,\alpha}$ satisfies for all $x,y\in\R^d$ and $t>0$,
    \begin{align}
      \me{-t\cl_{a,\alpha}}(x,y)
      \sim \left(1\vee \frac{t^{1/\alpha}}{\abs x}\right)^\delta
            \left(1\vee \frac{t^{1/\alpha}}{\abs y}\right)^\delta
      t^{-d/\alpha}\left(1\wedge\frac{t^{1+d/\alpha}}{\abs{x-y}^{d+\alpha}}\right).
    \end{align}
\end{theorem}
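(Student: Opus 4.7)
The statement is explicitly introduced as a recollection of recent results, so my plan is not to reprove it from scratch but to assemble the bound by quoting the appropriate theorems from the literature: the upper bound is established in \cite{Bogdanetal2019} and \cite{JakubowskiWang2018}, the matching lower bound is in \cite{Choetal2018}, and in the critical case $a=a_*$ the sharp two-sided version appears in \cite{JakubowskiWang2018}. The statement as written just repackages these results into a single two-sided comparison, with the exponent $\delta$ identified through the relation \eqref{eq:defdelta}; the identification $\delta=\Psi_{\alpha,d}^{-1}(a)$ matches the $\delta$ (often called $\sigma$ in those papers) that describes the ground-state decay/growth of the stationary problem for $\mathcal L_{a,\alpha}$, i.e.\ the fact that $|x|^{-\delta}$ is a formal solution of $\mathcal L_{a,\alpha}|x|^{-\delta}=0$.

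If I were to give a self-contained argument, the natural route would be probabilistic, via the Feynman--Kac representation
\[
\me{-t\cl_{a,\alpha}}(x,y) = p_\alpha(t,x,y)\, \mathbb E^{x,y}_{t}\!\left[\exp\!\Bigl(-a\!\int_0^t\!|X_s|^{-\alpha}\,ds\Bigr)\right],
\]
where $p_\alpha$ is the free isotropic $\alpha$-stable heat kernel and $X_s$ is the corresponding bridge from $x$ to $y$ of length $t$. The free heat kernel already supplies the factors $t^{-d/\alpha}$ and $\bigl(1\wedge t^{1+d/\alpha}|x-y|^{-d-\alpha}\bigr)$ via the classical estimates of \cite{BlumenthalGetoor1960}. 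The contribution of the exponential functional near $x$ and $y$ is what yields the boundary factors $(1\vee t^{1/\alpha}/|x|)^\delta$ and $(1\vee t^{1/\alpha}/|y|)^\delta$: the bridge spends an order-one fraction of time within distance $\sim(|x|\vee t^{1/\alpha})$ of $x$ (and similarly near $y$), and on such a region the averaged potential $|X_s|^{-\alpha}$ behaves like $(|x|\vee t^{1/\alpha})^{-\alpha}$, so the exponential functional contributes a power with exponent dictated by the eigenvalue equation $\Psi_{\alpha,d}(\delta)=a$.

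A useful book-keeping device is scaling: the operator $\cl_{a,\alpha}$ is invariant under the dilation $x\mapsto\lambda x$, $t\mapsto\lambda^\alpha t$, which forces any two-sided estimate to take a scale-free form in the variables $t^{1/\alpha}/|x|$, $t^{1/\alpha}/|y|$, $|x-y|/t^{1/\alpha}$; combined with the a priori information that $\me{-t\cl_{a,\alpha}}$ is sandwiched between multiples of the free heat kernel on the regime where $|x|,|y|\gtrsim t^{1/\alpha}$ (no Hardy correction), this essentially determines the claimed form up to the exponent of the boundary factors, which is then pinned down as $\delta$ via the stationary solution.

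The genuinely hard step in this program is proving the correct \emph{lower} bound in the critical case $a=a_*$, where the two roots of $\Psi_{\alpha,d}(\sigma)=a$ coalesce and the Feynman--Kac functional becomes delicate to control; this is exactly the obstruction resolved in \cite{JakubowskiWang2018} by a careful bridge analysis together with a Doob-type $h$-transform using the function $h(x)=|x|^{-\delta}$, after which the transformed process is a well-behaved Markov process whose transition density one can estimate by direct means. Since all these ingredients are already carried out in the cited papers, my proof of Theorem \ref{Thm:2.1} would amount to stating the precise form of their results and verifying that the notational conventions (in particular, the relation between their parameter $\sigma$ and our $\delta=\Psi_{\alpha,d}^{-1}(a)$) match.
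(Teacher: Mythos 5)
Your proposal takes the same route as the paper: Theorem~\ref{Thm:2.1} is not reproved but quoted from \cite{Bogdanetal2019,Choetal2018,JakubowskiWang2018}, with the translation $\delta=\Psi_{\alpha,d}^{-1}(a)$ handled by matching notation. One technical point the paper makes explicit that your proposal leaves implicit is not just notational bookkeeping but an actual identification of operators: in \cite{Bogdanetal2019} the heat kernel for which the two-sided bounds are proved is constructed via a perturbation series, so a priori it is not clear that the generator of that semigroup is the Friedrichs extension of $|p|^\alpha+a|x|^{-\alpha}$ on $C_c^\infty(\R^d)$, which is how $\cl_{a,\alpha}$ is defined here. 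The paper closes this gap by pointing to \cite[Theorem~5.4]{Bogdanetal2019}, which identifies the quadratic form of the generator with the form shown in \cite[Proposition~4.1]{Franketal2008H} to be the Friedrichs form. Your probabilistic and scaling discussion is reasonable motivation but plays no role in the paper's treatment; the only substantive addition you should make is this operator identification step.
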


We mention a technical point related to the deduction of these results from \cite{Bogdanetal2019}. There the heat kernel, for which the bounds are shown, is defined via a perturbation series. In \cite[Theorem 5.4]{Bogdanetal2019} the authors show that the quadratic form associated to the generator of the corresponding semi-group coincides with a certain quadratic form which in \cite[Proposition 4.1]{Franketal2008H} is identified as the quadratic form of the Friedrichs extension of the operator $\cl_{a,\alpha}$ on $C_c^\infty(\R^d)$. Therefore the bounds are indeed valid for the heat kernel of $\cl_{a,\alpha}$.

From Theorem \ref{Thm:2.1} we can now derive the claimed Riesz kernel bounds.

\begin{proof}[Proof of Theorem \ref{Thm:1.4}]
By the spectral theorem the Riesz kernel can be represented as
  \begin{align}
  \label{eq:rieszproof}
  \cl_{a,\alpha}^{-s/2}(x,y)=\frac{1}{\Gamma(s/2)}\int_0^\infty\me{-t\cl_{a,\alpha}}(x,y)\,t^{s/2} \,\frac{\dt}{t} \,.
  \end{align}
Inserting the two-sided bound on $\me{-t\cl_{a,\alpha}}(x,y)$ in Theorem \ref{Thm:2.1} and changing variables we see that the right side of \eqref{eq:rieszproof} is equivalent to
    \begin{align}
    \label{eq:2.3}
    \begin{split}
      &\int_0^\infty t^{-\frac d\alpha+\frac{s}{2}}
       \left(1\vee\frac{t^{1/\alpha}}{\abs x}\right)^\delta
       \left(1\vee\frac{t^{1/\alpha}}{\abs y}\right)^\delta
       \left(1\wedge\frac{t^{1+d/\alpha}}{\abs{x-y}^{d+\alpha}}\right)\frac{\dt}{t}\\
       =& \abs{x-y}^{\alpha\frac{s}{2}-d}
       \int_0^\infty \dt\ t^\frac s2 \left( 1 \wedge t^{-\frac{d}{\alpha}-1} \right)
         \left(1\vee\frac{\abs{x-y}}{\abs x}t^{\frac{1}{\alpha}}\right)^\delta
         \left(1\vee\frac{\abs{x-y}}{\abs y}t^{\frac{1}{\alpha}}\right)^\delta \,.
    \end{split}
    \end{align}
Since $\cl_{a,\alpha}^{-s/2}(x,y)$ is symmetric in $x$ and $y$, we may and will assume that $|x|\leq |y|$. Therefore, we need to show that the integral on the right side of \eqref{eq:2.3} is equivalent to
    \begin{align*}
      \left(1\wedge\frac{\abs x}{\abs{x-y}}\right)^{-\delta}
    \end{align*}
for $|x|\leq |y|$. To do this, we distinguish whether $|x-y|\leq 4|x|$ or not.

\medskip

\emph{Case $|x-y|\leq 4|x|$.} In this case we have $|y|\leq |x|+|x-y|\leq 5|x|$ and therefore
$$
|x-y|\lesssim |x| \sim |y| \,.
$$
Thus, the integral on the right side of \eqref{eq:2.3} is equivalent to
$$
\int_0^\infty \dt\ t^\frac s2 \left( 1 \wedge t^{-\frac{d}{\alpha}-1} \right)
         \left(1\vee \lambda^{-1} t^{\frac{1}{\alpha}}\right)^{2\delta}
$$
with $\lambda = |x|/|x-y|\geq 1/4$, and we need to show that this is equivalent to $1$. We have
\begin{align*}
\int_0^{\lambda^\alpha} \dt\ t^\frac s2 \left( 1 \wedge t^{-\frac{d}{\alpha}-1} \right)
         \left(1\vee \lambda^{-1} t^{\frac{1}{\alpha}}\right)^{2\delta}
& = \int_0^{\lambda^\alpha} \dt\ t^\frac s2 \left( 1 \wedge t^{-\frac{d}{\alpha}-1} \right) \sim 1 \,,
\end{align*}
since $\lambda\geq 1/4$ and since the integral converges according to the assumption $\frac s2 <\frac d\alpha$. On the other hand, using again $\lambda\geq 1/4$,
\begin{align*}
\int_{\lambda^\alpha}^\infty \dt\ t^\frac s2 \left( 1 \wedge t^{-\frac{d}{\alpha}-1} \right) \left(1\vee \lambda^{-1} t^{\frac{1}{\alpha}}\right)^{2\delta}
& = \int_{\lambda^\alpha}^\infty \dt\ t^\frac s2 \left( 1 \wedge t^{-\frac{d}{\alpha}-1} \right) \lambda^{-2\delta} t^\frac{2\delta}{\alpha} \\
& \lesssim \int_{\lambda^\alpha}^\infty \dt\ t^{\frac{s}{2}-\frac{d}{\alpha}-1} \lambda^{-2\delta} t^\frac{2\delta}{\alpha} \sim \lambda^{\frac{s\alpha}{2} -d} \lesssim 1 \,,
\end{align*}
where we used $\frac{s}{2}+\frac{2\delta}{\alpha}<\frac{d}{\alpha}$ for the convergence of the integral, as well as $\frac{s\alpha}{2}<d$. This proves the claimed upper bound. Since the integral from zero to $\lambda^\alpha$ is bounded away from zero and that from $\lambda^\alpha$ to infinity is non-negative, we also obtain the claimed lower bound.

\medskip

\emph{Case $|x-y|\geq 4|x|$.} In this case we have both $|x-y|\leq |x|+|y|\leq 2|y|$ and $|y|\leq |x|+|x-y|\leq \frac54 |x-y|$ and therefore
$$
|x|\leq |y|\sim |x-y| \,.
$$
Thus, the integral on the right side of \eqref{eq:2.3} is equivalent to
$$
\int_0^\infty \dt\ t^\frac s2 \left( 1 \wedge t^{-\frac{d}{\alpha}-1} \right)
         \left(1\vee \lambda^{-1} t^{\frac{1}{\alpha}}\right)^{\delta} \left(1\vee t^{\frac{1}{\alpha}}\right)^{\delta}
$$
with $\lambda = |x|/|x-y|\leq 1/4$, and we need to show that this is equivalent to $\lambda^{-\delta}$. We have, using $\lambda\leq 1/4$ and $\frac{s}{2}+\frac{\delta}{\alpha}>-1$,
\begin{align*}
\int_{\lambda^\alpha}^1 \dt\ t^\frac s2 \left( 1 \wedge t^{-\frac{d}{\alpha}-1} \right) \left(1\vee \lambda^{-1} t^{\frac{1}{\alpha}}\right)^{\delta} \left(1\vee t^{\frac{1}{\alpha}}\right)^{\delta}
& = \int_{\lambda^\alpha}^1 \dt\ t^{\frac{s}{2}} \lambda^{-\delta} t^\frac{\delta}{\alpha} \sim \lambda^{-\delta}
\end{align*}
and, using $\frac{s}{2}+\frac{2\delta}{\alpha}<\frac{d}{\alpha}$,
$$
\int_1^\infty \dt\ t^\frac s2 \left( 1 \wedge t^{-\frac{d}{\alpha}-1} \right) \left(1\vee \lambda^{-1} t^{\frac{1}{\alpha}}\right)^{\delta} \left(1\vee t^{\frac{1}{\alpha}}\right)^{\delta}
= \int_1^\infty \dt\ t^{\frac{s}{2}-\frac{d}{\alpha}-1} \lambda^{-\delta} t^{\frac{\delta}{\alpha}} t^{\frac{\delta}{\alpha}} \sim \lambda^{-\delta} \,.
$$
On the other hand,
$$
\int_0^{\lambda^\alpha} \dt\ t^\frac s2 \left( 1 \wedge t^{-\frac{d}{\alpha}-1} \right) \left(1\vee \lambda^{-1} t^{\frac{1}{\alpha}}\right)^{\delta} \left(1\vee t^{\frac{1}{\alpha}}\right)^{\delta}
= \int_0^{\lambda^\alpha} \dt\ t^{\frac{s}{2}} \sim \lambda^{\frac{s\alpha}{2}+\alpha} \lesssim \lambda^{-\delta} \,.
$$
These estimates yield the claimed upper and lower bounds. This completes the proof of the theorem.
  \end{proof}

%%%%%%%%%%%%%

\section{Difference of heat kernels}\label{sec:difference}

Our goal in this section is to prove Proposition \ref{reversehardy}. A key tool for this will be a bound on the difference of the heat kernels of $\mathcal L_{a,\alpha}$ and $|p|^\alpha$, namely,
$$
K_t^\alpha(x,y):= \me{-t |p|^\alpha}(x,y) - \me{-t\cl_{a,\alpha}}(x,y) \,.  
$$
We recall from the previous section that we have bounds on the individual kernels $\me{-t |p|^\alpha}(x,y)$ and $\me{-t\cl_{a,\alpha}}(x,y)$. The following lemma shows that there is a cancellation, coming from taking the difference, in the region $(|x|\vee |y|)^\alpha\geq t$ and $|x|\sim |y|$.

We will formulate the bound in terms of the functions
$$
L_t^\alpha(x,y) := \1_{\{(|x|\vee|y|)^\alpha\leq t\}} t^{-\frac{d}{\alpha}} \left( \frac{t^{2/\alpha}}{|x||y|} \right)^{\delta_+}
+ \1_{\{ (|x|\vee|y|)^\alpha\geq t\}} \frac{t}{(\abs x\vee\abs y)^{d+\alpha}} \left( 1 \vee \frac{t^{1/\alpha}}{|x|\wedge|y|} \right)^{\delta_+}
$$
and
$$
M_t^\alpha(x,y) := \1_{\{(|x|\vee|y|)^\alpha\geq t\}} \1_{\{\frac12 |x|\leq |y|\leq 2|x|\}}
\frac{t^{1-\frac{d}{\alpha}}}{(|x|\wedge|y|)^\alpha} \left( 1 \wedge \frac{t^{1+\frac{d}{\alpha}}}{|x-y|^{d+\alpha}} \right).
$$
Here $\delta_+=\max\{\delta,0\}$, that is, $\delta_+=0$ if $a\geq 0$ and
$\delta_+=\delta$ if $a<0$.

\begin{lemma}[Difference of kernels]\label{Lem:3.4}
Let $\alpha\in(0,2\wedge d)$, $a\in[a_*,\infty)$ and let $\delta$ be defined by \eqref{eq:defdelta}. Then for all $x,y\in\R^d$ and $t>0$
  \begin{align}
    \label{eq:4.7}
    \abs{K_t^\alpha(x,y)}\lesssim
L_t^\alpha(x,y) + M_t^\alpha(x,y) \,.
  \end{align}
\end{lemma}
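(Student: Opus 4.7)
The plan is to base the proof on Duhamel's formula. Differentiating $s\mapsto\me{-(t-s)|p|^\alpha}\me{-s\cl_{a,\alpha}}$ yields the kernel identity
\begin{equation*}
K_t^\alpha(x,y) = a\int_0^t\int_{\R^d}\me{-(t-s)|p|^\alpha}(x,z)\,|z|^{-\alpha}\,\me{-s\cl_{a,\alpha}}(z,y)\,\dz\,\ds,
\end{equation*}
and because each factor in the integrand is non-negative, the same integral (with $|a|$ in front) dominates $|K_t^\alpha(x,y)|$ pointwise. Every factor is now controlled by Theorem \ref{Thm:2.1} (used with $a=0$ for the free kernel), so what remains is to split into the natural regions of the parameters $(x,y,t,z,s)$ and add up.

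For the regions in which $L_t^\alpha$ already dominates the sum of the two individual kernels, I would bypass Duhamel and invoke the triangle inequality $|K_t^\alpha|\leq \me{-t|p|^\alpha}(x,y)+\me{-t\cl_{a,\alpha}}(x,y)$. When $(|x|\vee|y|)^\alpha\leq t$, one has $|x-y|\lesssim t^{1/\alpha}$, so the min-factor in Theorem \ref{Thm:2.1} is $\sim 1$ and each kernel is dominated by $t^{-d/\alpha}(t^{2/\alpha}/(|x||y|))^{\delta_+}$, matching the first piece of $L_t^\alpha$. When $(|x|\vee|y|)^\alpha\geq t$ but $|x|$ and $|y|$ are not comparable (say $|y|\geq 2|x|$, so $|x-y|\sim|y|$), the algebraic off-diagonal tails in Theorem \ref{Thm:2.1} directly reproduce the second piece of $L_t^\alpha$, with the factor $(1\vee t^{1/\alpha}/(|x|\wedge|y|))^{\delta_+}$ accounting for the Hardy blow-up at the smaller of $|x|,|y|$.

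The interesting case is $(|x|\vee|y|)^\alpha\geq t$ together with $\tfrac12|x|\leq|y|\leq 2|x|$, where $L_t^\alpha$ alone is not sharp and one must exploit the cancellation built into Duhamel. Set $r:=|x|\sim|y|$, so $t\leq r^\alpha$. I would partition the $z$-integral into the diagonal piece $A:=\{|z|\sim r\}$ and its complement. On $A$ one has $|z|^{-\alpha}\sim r^{-\alpha}$, and since $s\leq t\leq r^\alpha\lesssim|z|^\alpha$ the Hardy enhancement $(1\vee s^{1/\alpha}/|z|)^\delta\sim 1$, so Theorem \ref{Thm:2.1} gives $\me{-s\cl_{a,\alpha}}(z,y)\lesssim \me{-s|p|^\alpha}(z,y)$ on $A$. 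Pulling out $r^{-\alpha}$, enlarging $A$ to $\R^d$, and invoking the semigroup identity then yields
\begin{equation*}
\int_0^t\int_A \me{-(t-s)|p|^\alpha}(x,z)\,|z|^{-\alpha}\,\me{-s\cl_{a,\alpha}}(z,y)\,\dz\,\ds \lesssim \frac{t}{r^\alpha}\,\me{-t|p|^\alpha}(x,y),
\end{equation*}
which is precisely $M_t^\alpha(x,y)$ by the free-kernel formula of Theorem \ref{Thm:2.1}.

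The main obstacle, I expect, is showing that the tail of the Duhamel integral in the cancellation region, over $\{|z|\ll r\}\cup\{|z|\gg r\}$, is absorbed by $L_t^\alpha(x,y)$. The region $|z|\ll r$ is the delicate one: both $|z|^{-\alpha}$ and the Hardy enhancement in $\me{-s\cl_{a,\alpha}}(z,y)$ are singular at the origin, and integrability requires $\delta+\alpha<d$, an inequality that follows from $\delta\leq(d-\alpha)/2$ but whose constant worsens as $a\to a_*$. On the other hand, here the free kernel is in its algebraic-tail regime $\me{-(t-s)|p|^\alpha}(x,z)\lesssim(t-s)/r^{d+\alpha}$, which is a gain; after an appropriate further split of the $s$-integration one should match the second piece of $L_t^\alpha$. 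The region $|z|\gg r$ is analogous but simpler, since $|z|^{-\alpha}\leq r^{-\alpha}$ and both kernels are in their far-tail regime. The real content of the proof lies in organizing this bookkeeping, not in any individual estimate.
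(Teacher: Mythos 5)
Your strategy is essentially the paper's: Duhamel's formula is the engine in the comparable region $(|x|\vee|y|)^\alpha\geq t$, $|x|\sim|y|$, while the easy regions are dispatched directly from the two-sided kernel bounds of Theorem~\ref{Thm:2.1}; the $z$-integral is then split by distance to the common scale $r\sim|x|\sim|y|$, the comparable-$z$ piece is closed by the semigroup identity after pulling out $r^{-\alpha}$, and the small-$z$ piece is absorbed into $L_t^\alpha$ using $\delta_++\alpha<d$. The only organizational differences from the paper are cosmetic: the paper uses the maximum principle (giving sign-definiteness of $K_t^\alpha$) where you invoke the cruder triangle inequality, and it uses the two-way split $\{|z|\geq\frac12|x|\}\cup\{|z|<\frac12|x|\}$ rather than your three-way split (your $|z|\gg r$ tail is folded, in the paper, into the semigroup piece simply by bounding $|z|^{-\alpha}\lesssim|x|^{-\alpha}$ there and enlarging to $\R^d$). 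One small inaccuracy: the small-$z$ $s$-integral closes without any further splitting — after bounding both off-diagonal factors by their algebraic tails, the $s$-integrand collapses to $s(1-s)$ — and the gap in $\delta_++\alpha<d$ stays bounded away from zero even at $a=a_*$ because $\delta\leq(d-\alpha)/2$, so the integrability margin does not degenerate as $a\to a_*$.
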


\begin{proof}
By scaling it suffices to consider $t=1$ and by symmetry it suffices to consider $|x|\leq |y|$. We will drop the subscript $t$ in $K_t^\alpha$, $L_t^\alpha$ and $M_t^\alpha$.

If $a\geq 0$ we combine the maximum principle for the heat equation and the bound from Theorem \ref{Thm:2.1} for $a=0$ and obtain
$$
0 \leq K^\alpha(x,y) \leq \me{-\abs p^\alpha}(x,y)
     \sim 1\wedge \abs{x-y}^{-d-\alpha} \,.
$$
This proves the bound $K^\alpha(x,y)\lesssim L^\alpha(x,y)$ if $|y|\leq 1$ (by bounding the minimum by $1$) or if $|y|\geq 1$ and $|x|\leq (1/2)|y|$ (by bounding the minimum by $|x-y|^{-d-\alpha} \lesssim |y|^{-d-\alpha}$).

If $a<0$, then again by combining the maximum principle and Theorem \ref{Thm:2.1} we find
$$
0 \leq - K^\alpha(x,y) \leq \me{-\cl_{a,\alpha}}(x,y)
\lesssim \left(1\vee \abs x^{-\delta}\right) \left(1\vee \abs y^{-\delta}\right)
    \left(1\wedge \abs{x-y}^{-d-\alpha} \right).
$$
Again this gives the bound $-K^\alpha(x,y)\lesssim L^\alpha(x,y)$ if $|y|\leq 1$ (because then the product of the first two factors on the right side is $\leq (|x||y|)^{-\delta}$) or if $|y|\geq 1$ and $|x|\leq (1/2) |y|$ (because then the smaller one of the first two factors equals $1$).

Thus, from now on we assume that $|y|\geq 1$ and $(1/2)|y|\leq |x|\leq |y|$. By Duhamel's formula we have
  \begin{align}\label{eq:duhamel}
\me{-\abs p^\alpha}-\me{-\cl_{a,\alpha}} = a\int_0^1\ds\ \me{-(1-s)\abs p^\alpha}\abs x^{-\alpha}\me{-s\cl_{a,\alpha}} \,.
  \end{align}
If $a\geq 0$ we use again the maximum principle and Theorem \ref{Thm:2.1} (with $a=0$) to conclude that
$$
0\leq K^\alpha(x,y) \lesssim \int_0^1\ds\int_{\R^d}\dz\, \abs z^{-\alpha}s^{-\frac{d}{\alpha}}(1-s)^{-\frac{d}{\alpha}}
 \left(1\wedge\frac{(1-s)^{1+\frac{d}{\alpha}}}{\abs{x-z}^{d+\alpha}}\right)
 \left(1\wedge\frac{s^{1+\frac{d}{\alpha}}}{\abs{y-z}^{d+\alpha}}\right),
$$
and if $a<0$ we use Theorem \ref{Thm:2.1} to conclude that
  \begin{align*}
0\leq - K^\alpha(x,y) & \lesssim \int_0^1\ds
  \int_{\R^d}\dz\ s^{-\frac{d}{\alpha}}(1-s)^{-\frac{d}{\alpha}} \abs z^{-\alpha}\left(1\wedge\frac{(1-s)^{1+\frac{d}{\alpha}}}{\abs{x-z}^{d+\alpha}}\right)\\
   & \qquad\qquad\qquad\qquad \times\left(1 + \frac{s^{\frac{\delta}{\alpha}}}{\abs y^\delta}\right)
    \left(1+\frac{s^{\frac{\delta}{\alpha}}}{\abs z^\delta}\right)
    \left(1\wedge\frac{s^{1+\frac{d}{\alpha}}}{\abs{y-z}^{d+\alpha}}\right) \\
& \lesssim \int_0^1\ds
  \int_{\R^d}\dz\ s^{-\frac{d}{\alpha}}(1-s)^{-\frac{d}{\alpha}} \abs z^{-\alpha}\left(1\wedge\frac{(1-s)^{1+\frac{d}{\alpha}}}{\abs{x-z}^{d+\alpha}}\right)\\
   & \qquad\qquad\qquad\qquad \times \left(1+\frac{s^{\frac{\delta}{\alpha}}}{\abs z^\delta}\right)
    \left(1\wedge\frac{s^{1+\frac{d}{\alpha}}}{\abs{y-z}^{d+\alpha}}\right).
  \end{align*}
The second inequality here comes from the fact that $s^{1/\alpha}\leq 1\leq |y|$.

Thus, it remains to bound 
\begin{equation}
\label{eq:integralkbound}
\int_0^1\ds \int_{\R^d}\dz\ \ s^{-\frac{d}{\alpha}}(1-s)^{-\frac{d}{\alpha}} \abs z^{-\alpha}\left(1\wedge\frac{(1-s)^{1+\frac{d}{\alpha}}}{\abs{x-z}^{d+\alpha}}\right) \left(1+\frac{s^{\frac{\delta_+}{\alpha}}}{\abs z^{\delta_+}}\right)
    \left(1\wedge\frac{s^{1+\frac{d}{\alpha}}}{\abs{y-z}^{d+\alpha}}\right).
\end{equation}
We do this by dividing the $z$ integration into two parts and we begin with the part where $|z| \geq \frac12 |x|$. In this case, we can bound
$$
|z|^{-\alpha} \lesssim |x|^{-\alpha}
\qquad\text{and}\qquad
s^\frac{1}{\alpha} \leq 1 \leq |y| \lesssim |x| \lesssim |z| \,.
$$
Applying these bounds and then enlarging the $z$ integration to all of $\R^d$ we find that the integral in question is bounded by a constant times
\begin{align*}
& \frac{1}{|x|^\alpha} \int_0^1\ds \int_{\R^d}\dz\ \ s^{-\frac{d}{\alpha}}(1-s)^{-\frac{d}{\alpha}}  \left(1\wedge\frac{(1-s)^{1+\frac{d}{\alpha}}}{\abs{x-z}^{d+\alpha}}\right) \left(1\wedge\frac{s^{1+\frac{d}{\alpha}}}{\abs{y-z}^{d+\alpha}}\right) \\
& \lesssim \frac{1}{|x|^\alpha} \int_0^1\ds \int_{\R^d}\dz\ e^{-(1-s)|p|^\alpha}(x,z) e^{-s|p|^\alpha}(z,y)  \\
& = \frac{1}{|x|^\alpha} \int_0^1\ds\ e^{-|p|^\alpha}(x,y) \\
& \lesssim \frac{1}{|x|^\alpha} \left( 1\wedge |x-y|^{-d-\alpha} \right) = M^\alpha(x,y) \,,
\end{align*}
Here we used Theorem \ref{Thm:2.1} twice (with $a=0$) together with the semi-group property.

It remains to bound the part of the integral in \eqref{eq:integralkbound} corresponding to $|z|\leq\frac12|x|$. We bound
\begin{align*}
& \int\limits_{\abs z\leq\frac12\abs x} \dz\ \abs z^{-\alpha} \frac{1}{\abs{x-z}^{d+\alpha}} 
\left(1 + \frac{s^{\frac{\delta_+}{\alpha}}}{\abs z^{\delta_+}}\right)
\frac{1}{\abs{y-z}^{d+\alpha}} \\
& \quad \lesssim \frac{1}{|x|^{d+\alpha} |y|^{d+\alpha}} \int\limits_{\abs z\leq\frac12 \abs x} \dz\ \abs z^{-\alpha} \left(1 + \frac{s^{\frac{\delta_+}{\alpha}}}{\abs z^{\delta_+}}\right) \\
& \quad \sim \frac{1}{|x|^{d+\alpha} |y|^{d+\alpha}} \left( |x|^{d-\alpha} + s^\frac{\delta_+}{\alpha} |x|^{d-\alpha-\delta_+} \right) 
\sim \frac{1}{|x|^{d+\alpha}|y|^{d+\alpha}} \ |x|^{d-\alpha} \,.
\end{align*}
In the next to last step we used $\alpha +\delta_+<d$ (which follows from $\delta\leq (d-\alpha)/2$ and $\alpha< d$) and in the last step we used again $s^{1/\alpha}\leq 1\leq |y| \lesssim |x|$. Inserting this into the above bound we obtain
\begin{align*}
& \int_0^1\ds\ s^{-\frac{d}{\alpha}}(1-s)^{-\frac{d}{\alpha}}
  \int\limits_{\abs z\leq\frac12\abs x}\dz\ \abs z^{-\alpha}\left(1\wedge\frac{(1-s)^{1+\frac{d}{\alpha}}}{\abs{x-z}^{d+\alpha}}\right)\\
   & \qquad\qquad\qquad\qquad\qquad \times
    \left(1+\frac{s^{\frac{\delta_+}{\alpha}}}{\abs z^{\delta_+}}\right)
    \left(1\wedge\frac{s^{1+\frac{d}{\alpha}}}{\abs{y-z}^{d+\alpha}}\right) \\
& \quad \lesssim \frac{|x|^{d-\alpha}}{|x|^{d+\alpha}|y|^{d+\alpha}} \int_0^1\ds\ s(1-s) \\
& \quad \sim \frac{|x|^{d-\alpha}}{|x|^{d+\alpha}|y|^{d+\alpha}}
\lesssim \frac{1}{|y|^{d+\alpha}} \leq L^\alpha(x,y) \,.
\end{align*}
In the next to last step we used again $1\leq |y|\lesssim |x|$. This proves the claimed bound.
\end{proof}

Finally, we can complete the proof of our main result by giving the

\begin{proof}[Proof of Proposition \ref{reversehardy}]
Since for $s=2$ the inequality is trivial, we may assume that $0<s<2$. In that case
$$
\int_0^\infty \frac{\dt}{t}\, t^{-s/2} (e^{-t} - 1) = \Gamma(-s/2) \,,
$$
and therefore, by the spectral theorem and by scaling,
\begin{align*}
\left(\mathcal L_{a,\alpha}^{s/2} - |p|^{\alpha s/2} \right) f 
& = - \frac{1}{\Gamma(-s/2)} \int_0^\infty \frac{\dt}{t}\ t^{-s/2} \left( \left( e^{-t|p|^\alpha} - 1 \right) f - \left( e^{-t\mathcal L_{a,\alpha}}-1\right) f \right) \notag \\
& = - \frac{1}{\Gamma(-s/2)} \int_0^\infty \frac{\dt}{t}\ t^{-s/2} \left( e^{-t|p|^\alpha} - e^{-t\mathcal L_{a,\alpha}}\right) f  \notag \\
& = - \frac{1}{\Gamma(-s/2)} \int_0^\infty \frac{\dt}{t}\ t^{-s/2} \int_{\R^d} \dy\ K_t^\alpha(\cdot,y) f(y) \,.
\end{align*}
Therefore, by Lemma \ref{Lem:3.4} we can bound
\begin{align}\label{eq:schur}
\left\| \left(\mathcal L_{a,\alpha}^{s/2} - |p|^{\alpha s/2} \right) f \right\|_2
& \lesssim \norm{\int_{\R^d}\dy\ \int_0^\infty \frac{\dt}{t}\ t^{-\frac{s}{2}}L_t^\alpha(x,y)\abs y^{\alpha\frac{s}{2}} g(y)}_2 \notag\\
& \quad +\norm{\int_{\R^d}\dy\ \int_0^\infty \frac{\dt}{t}\ t^{-\frac{s}{2}} M_t^\alpha(x,y) \abs y^{\alpha\frac{s}{2}}g(y)}_2 \,,
\end{align}
where we abbreviate $g(y) := |y|^{-\alpha s/2} |f(y)|$. Our goal is to bound both terms on the right side of \eqref{eq:schur} by a constant times $\|g\|_2$.

We begin with the first term and compute
\begin{align*}
\int_0^\infty \frac{\dt}{t}\ t^{-\frac{s}{2}}L_t^\alpha(x,y)
& = (|x||y|)^{-\delta_+} \int\limits_{t\geq(\abs x\vee\abs y)^\alpha}\frac{\dt}{t}\ t^{-\frac{s}{2} - \frac{d-2\delta_+}{\alpha}} \\
& \quad
+ \int\limits_{t\leq(\abs x\vee\abs y)^\alpha}\frac{\dt}{t}\, t^{-\frac{s}{2}}\, \frac{t}{(|x|\vee|y|)^{d+\alpha}} \left( 1 \vee \frac{t^{1/\alpha}}{|x|\wedge|y|} \right)^{\delta_+} \\
& \sim \frac{1}{(|x|\vee|y|)^{\frac{s\alpha}2 +d}} \left( \frac{|x|\vee|y|}{|x|\wedge|y|} \right)^{\delta_+} .
\end{align*}
Here we used the fact that $\frac{s}{2} + \frac{d-2\delta_+}{\alpha}>0$ (which follows from $\delta\leq(d-\alpha)/2$.) Thus,
$$
\norm{\int_{\R^d}\dy\ \int_0^\infty \frac{\dt}{t}\ t^{-\frac{s}{2}}L_t^\alpha(x,y) \abs y^{\alpha\frac{s}{2}}g(y)}_2
\lesssim \norm{\int_{\R^d}\dy\ \frac{1}{(|x|\vee|y|)^{d}} \left( \frac{|x|\vee|y|}{|x|\wedge|y|} \right)^{\delta_+} g(y)}_2 \,.
$$
For any $\delta_+<\beta<d-\delta_+$ (such $\beta$ exist since $\delta\leq (d-\alpha)/2<d/2$) we have
$$
\sup_{y\in\R^d} \int_{\R^d} \dx\, \left( \frac{|y|}{|x|} \right)^\beta \frac{1}{(|x|\vee|y|)^{d}} \left( \frac{|x|\vee|y|}{|x|\wedge|y|} \right)^{\delta_+} =\int_{\R^d} \frac{\dz}{|z|^\beta (|z|\vee 1)^{d}} \left( \frac{|z|\vee 1}{|z|\wedge 1} \right)^{\delta_+} <\infty \,,
$$
and therefore by a Schur test with weights we conclude that
$$
\norm{\int_{\R^d}\dy\ \frac{1}{(|x|\vee|y|)^{d}} \left( \frac{|x|\vee|y|}{|x|\wedge|y|} \right)^{\delta_+} g(y)}_2 \lesssim \|g\|_2 \,.
$$
This shows that the first term in \eqref{eq:schur} satisfies the claimed bound.

We now turn to the second term in \eqref{eq:schur}. Since $|x|\sim |y|$ on the support of this kernel, we have
$$
\norm{\int_{\R^d} \dy \int_0^\infty \frac{\dt}{t}\, t^{-\frac{s}{2}} \, M_t^\alpha(x,y) |y|^\frac{\alpha s}{2} g(y) }
\lesssim \norm{\int_{\R^d} \dy \int_0^\infty \frac{\dt}{t}\, t^{-\frac{s}{2}} \, M_t^\alpha(x,y) (|x||y|)^\frac{\alpha s}{4} g(y) }.
$$
This replaces the kernel by a symmetric one and we only have to perform a single Schur test (instead of two). We have
\begin{align*}
& \sup_{y\in\R^d} \int_{\R^d} \dx \int_0^\infty \frac{\dt}t\ t^{-\frac{s}{2}}\, M_t^\alpha(x,y)  (|x||y|)^\frac{\alpha s}{4} \\
& = \sup_{y\in\R^d} \int\limits_{\frac12 |y|\leq |x|\leq 2|y|} \dx \int\limits_{t\leq(|x|\vee|y|)^\alpha} \frac{\dt}t\ t^{-\frac{s}{2}}\, \frac{t^{1-\frac d\alpha}}{(|x|\wedge |y|)^\alpha} \left( 1 \wedge \frac{t^{1+\frac d\alpha}}{|x-y|^{d+\alpha}} \right)  (|x||y|)^\frac{\alpha s}{4} \\
& \lesssim \sup_{y\in\R^d} |y|^{\frac{\alpha s}{2}-\alpha} \int\limits_{\frac12 |y|\leq |x|\leq 2|y|} \dx \int\limits_{t\leq(2|y|)^\alpha} \frac{\dt}t\ t^{-\frac{s}{2} + 1 - \frac{d}{\alpha}} \, \left( 1 \wedge \frac{t^{1+\frac d\alpha}}{|x-y|^{d+\alpha}} \right)  \,.
\end{align*}
We now interchange the order of integration and do the $x$ integral first. We bound
\begin{align*}
\int\limits_{\frac12 |y|\leq |x|\leq 2|y|} \dx \left(1\wedge\frac{t^{1+\frac{d}{\alpha}}}{\abs{x-y}^{d+\alpha}}\right)
\leq \int_{\R^d} \dx \left(1\wedge\frac{t^{1+\frac{d}{\alpha}}}{\abs{x-y}^{d+\alpha}}\right) \sim t^\frac{d}{\alpha} \,.
\end{align*}
Therefore, the supremum above is bounded by a constant times
$$
\sup_{y\in\R^d} |y|^{\frac{\alpha s}{2}-\alpha} \int\limits_{t\leq(2|y|)^\alpha} \frac{\dt}t\ t^{-\frac{s}{2} + 1} \sim 1 \,.
$$
Thus, the Schur test yields
$$
\norm{\int_{\R^d} \dy \int_0^\infty \frac{\dt}{t}\, t^{-\frac{s}{2}} \, M_t^\alpha(x,y) (|x||y|)^\frac{\alpha s}{4} g(y)}
 \lesssim \|g\|_2 \,.
$$
This concludes the proof of the proposition.
\end{proof}

%%%%%%%%%%%%%%%%%%%%%%%%%%%%%%%

\section{A generalization}

In our application in \cite{Franketal2020P} we will need a slight generalization of the bounds that we have derived so far to the case of not necessarily power-like potentials. More precisly, we consider functions $V$ on $\R^d$ satisfying
\begin{equation}\label{eq:u}
\frac{a}{|x|^\alpha} \leq V(x) \leq \frac{\tilde a}{|x|^\alpha}
\end{equation}
with parameters $a_*\leq a\leq \tilde a<\infty$ and we prove the following result.

\begin{theorem}
  \label{Thm:1.2gen}
Let $\alpha\in(0,2\wedge d)$, $a_*\leq a\leq \tilde a<\infty$ and let $\delta$ be defined by \eqref{eq:defdelta}. Let $s\in (0,2]$.
\begin{enumerate}
\item[(1)] If $s<\frac{d-2\delta}{\alpha}$, then for any $V$ satisfying \eqref{eq:u},
\begin{align}
      \label{eq:12agen}
    \norm{\abs p^{\alpha s/2}f}_{L^2(\R^d)}\lesssim_{d,\alpha,a,s}\norm{(|p|^\alpha+V)^{s/2}f}_{L^2(\R^d)}
    \text{ for all } f\in C_c^\infty(\R^d) \,.
  \end{align}
\item[(2)] If $s<\frac{d}{\alpha}$, then for any $V$ satisfying \eqref{eq:u},
  \begin{align}
    \label{eq:12bgen}
    \norm{(|p|^\alpha+V)^{s/2}f}_{L^2(\R^d)}\lesssim_{d,\alpha,a,s} \norm{\abs p^{\alpha s/2}f}_{L^2(\R^d)}
    \qquad\text{for all}\ f\in C_c^\infty(\R^d) \,.
  \end{align}
\end{enumerate}
\end{theorem}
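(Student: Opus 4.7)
The plan is to adapt the proof scheme of Theorem \ref{Thm:1.2}, replacing $\cl_{a,\alpha}$ with $H := |p|^\alpha + V$ (defined as the Friedrichs extension on $C_c^\infty(\R^d)$, which is bounded below thanks to the lower bound in \eqref{eq:u} and Hardy's inequality \eqref{eq:hardy}). The key new ingredient will be a pointwise heat kernel sandwich
\begin{equation*}
\me{-t\cl_{\tilde a,\alpha}}(x,y) \;\leq\; \me{-tH}(x,y) \;\leq\; \me{-t\cl_{a,\alpha}}(x,y), \qquad x,y\in\R^d,\ t>0,
\end{equation*}
which I would derive from the Feynman--Kac representation of each semigroup against the isotropic $\alpha$-stable bridge together with the pointwise monotonicity of $V \mapsto \exp(-\int_0^t V(X_s)\,\ds)$ implied by \eqref{eq:u}. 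An equivalent route is via Trotter's product formula, or via the monotonicity of the perturbation series employed in \cite{Bogdanetal2019}.

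Integrating the upper bound in the sandwich against $t^{s/2-1}/\Gamma(s/2)$ yields the pointwise Riesz kernel bound $0\leq H^{-s/2}(x,y)\leq \cl_{a,\alpha}^{-s/2}(x,y)$. By the same Schur test as in the proof of Proposition \ref{Prop:1.3}, this implies the generalized Hardy inequality $\||x|^{-\alpha s/2} f\|_2 \lesssim \|H^{s/2} f\|_2$ in the stated range $s<(d-2\delta)/\alpha$. For the analogue of Proposition \ref{reversehardy}, Duhamel's formula gives
\begin{equation*}
\me{-t|p|^\alpha}(x,y)-\me{-tH}(x,y) = \int_0^t \ds \int_{\R^d}\dz\; \me{-(t-s)H}(x,z)\; V(z)\; \me{-s|p|^\alpha}(z,y),
\end{equation*}
and combining the pointwise bound $|V(z)|\leq \max(|a|,|\tilde a|)\,|z|^{-\alpha}$ with the upper sandwich bound $\me{-(t-s)H}(x,z)\leq \me{-(t-s)\cl_{a,\alpha}}(x,z)$ reduces the right-hand side to (a constant times) the very integral estimated in the proof of Lemma \ref{Lem:3.4}. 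Hence $|\me{-t|p|^\alpha}(x,y)-\me{-tH}(x,y)|\lesssim L_t^\alpha(x,y)+M_t^\alpha(x,y)$, and the Schur tests performed at the end of the proof of Proposition \ref{reversehardy} transfer verbatim, yielding $\|(H^{s/2}-|p|^{\alpha s/2})f\|_2 \lesssim \||x|^{-\alpha s/2}f\|_2$ for all $s\in(0,2]$.

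With these two ingredients, Theorem \ref{Thm:1.2gen} then follows by the triangle-inequality computation used to deduce Theorem \ref{Thm:1.2} from Propositions \ref{Prop:1.3} and \ref{reversehardy}: for part (1) the generalized Hardy inequality for $H$ is combined with the reverse Hardy bound, while for part (2) the reverse Hardy bound is combined with the ordinary Hardy inequality \eqref{eq:hardy} applied with exponent $\alpha s<d$. I expect the main technical obstacle to be the rigorous justification of the heat kernel sandwich at the critical coupling $a=a_*$, since there both $\cl_{a_*,\alpha}$ and $H$ must be constructed as Friedrichs extensions and one must verify that these constructions coincide with the corresponding Feynman--Kac semigroups for general admissible $V$; the identification has been carried out for the pure Hardy potential in \cite[Proposition 4.1]{Franketal2008H} and \cite[Theorem 5.4]{Bogdanetal2019}, and the extension to $V$ satisfying \eqref{eq:u} should follow by a monotone limit argument.
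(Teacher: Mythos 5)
Your proposal is essentially the same as the paper's: both rest on the pointwise heat-kernel comparison (the paper calls it the ``maximum principle'')
$\me{-t\cl_{\tilde a,\alpha}}(x,y)\leq \me{-t(|p|^\alpha+V)}(x,y)\leq \me{-t\cl_{a,\alpha}}(x,y)$, together with Lemma~\ref{Lem:3.4} and the same Schur tests. The one genuine difference is how the kernel-difference bound is obtained. You re-run Duhamel's formula with the potential $V$ itself and then invoke the estimates of \eqref{eq:integralkbound}. The paper is slicker: it writes $\tilde K_t^\alpha:=\me{-t|p|^\alpha}-\me{-t(|p|^\alpha+V)}$, observes that the sandwich pins $\tilde K_t^\alpha$ \emph{between} $K_t^\alpha$ for $a$ and $K_t^\alpha$ for $\tilde a$, and then applies Lemma~\ref{Lem:3.4} to each, using that $\tilde\delta\leq\delta$ makes $L_t^\alpha,M_t^\alpha$ for $\tilde a$ dominated by those for $a$. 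This spares one from redoing the Duhamel computation and, more importantly, from having to re-cover the non-Duhamel regimes: in the proof of Lemma~\ref{Lem:3.4} the Duhamel integral \eqref{eq:integralkbound} is only used when $|y|\geq 1$ and $|x|\sim|y|$ (after scaling $t=1$), while the other regimes are handled by the direct maximum-principle bounds. Your sketch, as written, jumps from the Duhamel representation to the conclusion $|\tilde K_t^\alpha|\lesssim L_t^\alpha+M_t^\alpha$ without explicitly treating the complementary regimes; this is readily repaired using the same sandwich you already have, but the paper's two-sided comparison against the pure-Hardy differences $K_t^\alpha$ dispatches everything at once. The concern you raise about identifying the Friedrichs extension with the Feynman--Kac / perturbation-series semigroup at $a=a_*$ is legitimate; the paper handles it for the pure Hardy potential via \cite[Theorem 5.4]{Bogdanetal2019} and \cite[Proposition 4.1]{Franketal2008H} and tacitly extends the comparison principle to $|p|^\alpha+V$.
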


We emphasize that $\delta$ is defined with respect to $a$ (and not with respect to $\tilde a$). It is interesting to note that the constants in Theorem \ref{Thm:1.2gen} are independent of $\tilde a$. By an approximation argument this would allow us to extend the theorem to a larger class of potentials, but we will not need this.

The proof of Theorem \ref{Thm:1.2gen} follows immediately from the following two propositions in the same way as Theorem \ref{Thm:1.2} followed from Propositions \ref{Prop:1.3} and \ref{reversehardy}.

\begin{proposition}
 \label{Prop:1.3gen}
Let $\alpha\in(0,2\wedge d)$, $a_*\leq a\leq \tilde a<\infty$ and let $\delta$ be defined by \eqref{eq:defdelta}. Then, for any $s\in(0,\frac{d-2\delta}{\alpha}\wedge\frac{2d}{\alpha})$ and for any $V$ satisfying \eqref{eq:u},
 \begin{align*}
    \norm{\abs x^{-\alpha s/2}f}_2\lesssim_{d,\alpha,a,s} \norm{(|p|^\alpha+V)^{s/2}f}_2 
    \qquad\text{for all}\ f\in C_c^\infty(\R^d) \,.
 \end{align*}
\end{proposition}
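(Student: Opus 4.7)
The strategy is to reduce Proposition~\ref{Prop:1.3gen} to Proposition~\ref{Prop:1.3} through pointwise domination of the heat and Riesz kernels of $H:=|p|^\alpha+V$ by those of $\cl_{a,\alpha}$. Since $V\geq a|x|^{-\alpha}$ and $\cl_{a,\alpha}\geq 0$ by Hardy's inequality \eqref{eq:hardy}, the form sum $H$ is well defined as a nonnegative self-adjoint operator. The main step is to establish the pointwise heat kernel bound
\begin{equation*}
0\leq \me{-tH}(x,y)\leq \me{-t\cl_{a,\alpha}}(x,y),\qquad x,y\in\R^d,\ t>0.
\end{equation*}
I expect the cleanest route to go via the Feynman--Kac representation with respect to the isotropic $\alpha$-stable process $(X_s)$ that generates $\me{-t|p|^\alpha}$: formally,
\begin{equation*}
\me{-tH}(x,y)=p_t^\alpha(x,y)\,\mathbb E^{x,y,t}\!\left[\exp\!\left(-\int_0^t V(X_s)\,\ds\right)\right],
\end{equation*}
from which monotonicity in $V$ is immediate because $v\mapsto e^{-v}$ is decreasing. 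The identification of this Feynman--Kac semigroup with the form sum---even for potentials as singular as the Hardy potential---is exactly what is set up in the papers cited after Theorem~\ref{Thm:2.1} for $V=a|x|^{-\alpha}$, and a standard approximation argument (truncating $V$ to bounded potentials $V_n$ satisfying $a|x|^{-\alpha}\leq V_n\leq \tilde a|x|^{-\alpha}$ and passing to the monotone limit) should extend the framework to our $V$.

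Granted the heat kernel comparison, the Mellin-type representation
\begin{equation*}
H^{-s/2}(x,y)=\frac{1}{\Gamma(s/2)}\int_0^\infty \me{-tH}(x,y)\,t^{s/2}\,\frac{\dt}{t}
\end{equation*}
(whose convergence for $s<\frac{2d}{\alpha}$ follows from the domination and Theorem~\ref{Thm:2.1}) yields the Riesz kernel inequality $0\leq H^{-s/2}(x,y)\leq \cl_{a,\alpha}^{-s/2}(x,y)$. The desired bound is equivalent to $L^2$-boundedness of the integral operator with kernel $|x|^{-\alpha s/2}H^{-s/2}(x,y)$; since this is a nonnegative kernel dominated pointwise by $|x|^{-\alpha s/2}\cl_{a,\alpha}^{-s/2}(x,y)$, the Schur test already carried out in the proof of Proposition~\ref{Prop:1.3} (based on the upper bound of Theorem~\ref{Thm:1.4}) applies and gives the claim. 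Observe that the resulting constants depend only on $a$, since $\tilde a$ never enters the kernel estimate.

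I expect the main obstacle to be the heat kernel domination itself. At the level of quadratic forms we trivially have $H\geq\cl_{a,\alpha}$, but the map $A\mapsto \me{-tA}$ is not operator monotone, so the pointwise kernel comparison cannot be extracted from the form inequality alone; one must exploit the positivity-preserving structure of the semigroup of $|p|^\alpha$, which is precisely what the probabilistic framework of \cite{Bogdanetal2019,Choetal2018,JakubowskiWang2018} encodes. Thus the only genuinely new work compared to the $V=a|x|^{-\alpha}$ setting is to verify that their construction extends to potentials satisfying \eqref{eq:u}; the remainder of the argument, once the kernel comparison is in hand, is a direct invocation of results already proved in this paper.
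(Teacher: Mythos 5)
Your proposal is correct and follows essentially the same route as the paper: establish the pointwise heat kernel domination $0\leq \me{-t(|p|^\alpha+V)}(x,y)\leq \me{-t\cl_{a,\alpha}}(x,y)$, integrate via the Mellin/Gamma representation to pass to the Riesz kernels, and then invoke the Schur test from the proof of Proposition~\ref{Prop:1.3}. The paper simply cites ``the maximum principle'' for the kernel comparison, where you spell out the underlying Feynman--Kac mechanism and the approximation by bounded potentials; that is additional detail, not a different argument.
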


\begin{proof}
According to the maximum principle, we have for all $x,y\in\R^d$ and $t>0$,
\begin{equation}
\label{eq:maxprincu}
0\leq e^{-t(|p|^\alpha+V)}(x,y) \leq e^{-t\cl_{a,\alpha}}(x,y) \,,
\end{equation}
and therefore, by the analogue of \eqref{eq:rieszproof}, also
$$
(|p|^\alpha+V)^{-s/2}(x,y) \leq \cl_{a,\alpha}^{-s/2}(x,y) \,.
$$
This implies that the upper bounds in Theorem \ref{Thm:1.4} remain valid for $(|p|^\alpha +V)^{-s/2}$. The proposition now follows in the same way as Proposition \ref{Prop:1.3}. 
\end{proof}

\begin{proposition}
\label{reversehardygen}
Let $\alpha\in(0,2\wedge d)$, $a_*\leq a\leq \tilde a<\infty$ and $s\in(0,2]$. For any $V$ satisfying \eqref{eq:u},
$$
\left\| \left( \left( |p|^\alpha +V\right)^{s/2} - |p|^{\alpha s/2} \right) f \right\|_2 \lesssim_{d,\alpha,a,s} \norm{\abs x^{-\alpha s/2}f}_2
\qquad\text{for all}\ f\in C_c^\infty(\R^d) \,.
$$
\end{proposition}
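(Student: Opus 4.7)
The plan is to mirror the proof of Proposition~\ref{reversehardy}. The case $s=2$ is essentially trivial, since $(|p|^\alpha+V)^{s/2}-|p|^{\alpha s/2}=V$ acts as multiplication by $V(x)$, which by \eqref{eq:u} is bounded in absolute value by a constant times $\abs x^{-\alpha}$. For $0<s<2$ I will use the spectral representation
$$
\bigl((|p|^\alpha+V)^{s/2}-|p|^{\alpha s/2}\bigr)f
=-\frac{1}{\Gamma(-s/2)}\int_0^\infty\frac{\dt}{t}\,t^{-s/2}\int_{\R^d}\dy\,\tilde K_t^\alpha(\cdot,y)f(y),
$$
where $\tilde K_t^\alpha(x,y):=e^{-t|p|^\alpha}(x,y)-e^{-t(|p|^\alpha+V)}(x,y)$, and reduce the whole statement to a pointwise bound of the form $\abs{\tilde K_t^\alpha(x,y)}\lesssim L_t^\alpha(x,y)+M_t^\alpha(x,y)$ with the same functions $L_t^\alpha,M_t^\alpha$ as in Lemma~\ref{Lem:3.4}.

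To prove this kernel estimate I will essentially repeat the argument of Lemma~\ref{Lem:3.4}. The new ingredient is that, by monotonicity of heat kernels in the potential and \eqref{eq:u},
$$
0\leq e^{-t\cl_{\tilde a,\alpha}}(x,y)\leq e^{-t(|p|^\alpha+V)}(x,y)\leq e^{-t\cl_{a,\alpha}}(x,y),
$$
so that the heat kernel of $|p|^\alpha+V$ is sandwiched between two kernels controlled by Theorem~\ref{Thm:2.1}. In the regions $\abs x\vee\abs y\leq t^{1/\alpha}$ and $\abs x\leq\tfrac12\abs y$ (assuming $\abs x\leq\abs y$ by symmetry), the $L_t^\alpha$-bound follows directly from this double inequality together with the heat kernel bounds of Theorem~\ref{Thm:2.1}. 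In the remaining region $\abs x\sim\abs y\gg t^{1/\alpha}$ I will apply Duhamel's formula
$$
\tilde K_t^\alpha=\int_0^t\ds\,e^{-(t-s)|p|^\alpha}\,V\,e^{-s(|p|^\alpha+V)},
$$
combined with $\abs{V(z)}\leq(\abs a\vee\tilde a)\abs z^{-\alpha}$ and $e^{-s(|p|^\alpha+V)}\leq e^{-s\cl_{a,\alpha}}$, and then run verbatim the two-subregion argument of Lemma~\ref{Lem:3.4} (splitting the $z$-integral at $\abs z=\tfrac12\abs x$) to extract the $M_t^\alpha$-contribution.

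With the kernel bound in hand, the proposition follows immediately by applying the two Schur tests performed in the proof of Proposition~\ref{reversehardy}, since those tests only see the explicit form of $L_t^\alpha$ and $M_t^\alpha$ and are insensitive to which heat-kernel difference produced them.

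The main obstacle is to keep the implicit constant independent of $\tilde a$, as claimed in the remark following Theorem~\ref{Thm:1.2gen}: a naive Duhamel step introduces a prefactor $\abs a\vee\tilde a$. One way to address this is to decompose $V=a\abs x^{-\alpha}+W$ with $0\leq W\leq(\tilde a-a)\abs x^{-\alpha}$, so that
$$
\tilde K_t^\alpha=K_t^\alpha\big|_a+\bigl(e^{-t\cl_{a,\alpha}}-e^{-t(\cl_{a,\alpha}+W)}\bigr),
$$
estimate the first summand by Lemma~\ref{Lem:3.4}, and handle the second one using the maximum principle $0\leq e^{-t\cl_{a,\alpha}}-e^{-t(\cl_{a,\alpha}+W)}\leq e^{-t\cl_{a,\alpha}}$ together with a further Duhamel expansion whose prefactor is absorbed into $L_t^\alpha+M_t^\alpha$ via the sharp off-diagonal decay from Theorem~\ref{Thm:2.1}. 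Making this final absorption quantitatively clean is where the bulk of the technical work will lie.
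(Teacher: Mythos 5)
Your overall plan is correct, but you missed the shortcut that makes the paper's proof a three-line argument. Once you have the sandwich
\begin{equation*}
e^{-t\cl_{\tilde a,\alpha}}(x,y)\leq e^{-t(|p|^\alpha+V)}(x,y)\leq e^{-t\cl_{a,\alpha}}(x,y)
\end{equation*}
you immediately get
\begin{equation*}
e^{-t|p|^\alpha}(x,y)-e^{-t\cl_{a,\alpha}}(x,y)\ \leq\ \tilde K_t^\alpha(x,y)\ \leq\ e^{-t|p|^\alpha}(x,y)-e^{-t\cl_{\tilde a,\alpha}}(x,y)\,,
\end{equation*}
so that $\tilde K_t^\alpha$ is sandwiched between the two kernels $K_t^\alpha$ already estimated in Lemma~\ref{Lem:3.4}, one for coupling $a$ (giving $\delta$) and one for coupling $\tilde a$ (giving $\tilde\delta$). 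Since $\tilde\delta\leq\delta$ and $L_t^\alpha$ is pointwise nondecreasing in $\delta_+$ while $M_t^\alpha$ does not depend on $\delta_+$, both applications of Lemma~\ref{Lem:3.4} are dominated by $L_t^\alpha+M_t^\alpha$ taken with $\delta$, and the two Schur tests from Proposition~\ref{reversehardy} then finish the proof. There is no need to re-run the whole proof of Lemma~\ref{Lem:3.4} for $\tilde K_t^\alpha$, nor to set up a fresh Duhamel expansion with $V$; that is exactly the work the sandwich saves you. Your worry about the $\tilde a$-dependence of the implicit constant is a legitimate observation — the bound from Lemma~\ref{Lem:3.4} applied with coupling $\tilde a$ does a priori carry a $\tilde a$-dependent constant, and your proposed decomposition $V=a|x|^{-\alpha}+W$ does not obviously fix this, since the Duhamel prefactor for $W$ is still of size $\tilde a-a$. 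But this issue is not a gap in the proof of the proposition itself (the estimate holds with some finite constant), and the paper's proof simply does not engage with it; it is a subtlety orthogonal to the main argument and should not be allowed to pull you away from the much simpler route.
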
    

\begin{proof}
Let
$$
\tilde K_t^\alpha(x,y) := e^{-t|p|^\alpha}(x,y) - e^{-t(|p|^\alpha +V)}(x,y) \,.
$$
According to the maximum principle, we have for any $x,y\in\R^d$ and $t>0$,
$$
e^{-t|p|^\alpha}(x,y) - e^{-t\cl_{a,\alpha}}(x,y) \leq 
\tilde K_t^\alpha(x,y) \leq e^{-t|p|^\alpha}(x,y) - e^{-t\cl_{\tilde a,\alpha}}(x,y) \,.
$$
Therefore Lemma \ref{Lem:3.4} with $a$ and $\tilde a$ implies that the corresponding statement also holds for $\tilde K_t^\alpha$ in place of $K_t^\alpha$. (Here we use the fact that $\tilde\delta$, defined by \eqref{eq:defdelta} with $\tilde a$ in place of $a$, satisfies $\tilde\delta\leq\delta$.) With the analogue of Lemma \ref{Lem:3.4} at hand, the proof of the proposition follows in the same way as Proposition~\ref{reversehardy}.
\end{proof}

\appendix
\section{Extension of Theorem~\ref{Thm:1.2}}

In this appendix\footnote{This appendix does not appear in the published version of this paper.}, we show that the inequalities in Theorem~\ref{Thm:1.2} actually hold not only for functions in $C^\infty_c(\R^d)$, but for all functions in the domains of $(\cl_{a,\alpha})^{\frac s2}$ and $|p|^{\alpha s/2}$, respectively. As a consequence, we obtain the operator inequalities
$$
|p|^{\alpha s} \lesssim \mathcal L_{\alpha,a}^s \,,
\qquad\text{resp.}\qquad
\mathcal L_{\alpha,a}^s \lesssim |p|^{\alpha s} \,,
$$
which are used in our work \cite{Franketal2020P}. The precise statement is the following.

\begin{theorem}\label{Thm:1.2density}
  Let $\alpha\in(0,2\wedge d)$, $a\in [a_*,\infty)$ and let $\delta$ be defined by \eqref{eq:defdelta}. Let $s\in(0,2]$.
  \begin{enumerate}
  \item If $s<(d-2\delta)/\alpha$, then $\dom (\cl_{a,\alpha})^{s/2}\subset\dom |p|^{\alpha s/2}$ and \eqref{eq:12a} holds for all $f\in\dom (\cl_{a,\alpha})^{\frac s2}$. Moreover $C_c^\infty$ is an operator core for $\cl_{a,\alpha}^{\frac s2}$.

  \item If $s<\frac{d}{\alpha}$, then $\dom |p|^{\frac{\alpha s}{2}}\subset \dom (\cl_{a,\alpha})^{\frac{s}{2}}$ and \eqref{eq:12b} holds for all $f\in\dom |p|^{\frac{\alpha s}{2}}$. Moreover, $C_c^\infty$ is an operator core for $|p|^{\frac{\alpha s}{2}}$.
  \end{enumerate}
\end{theorem}

The proof of this theorem is follows closely that of \cite[Theorems~1 and 24]{FrankMerz2023}, which, in turn uses ideas of \cite[Lemma~4.4]{Killipetal2016}. Therefore we will be brief and only explain the main changes.

The proof is based on the following two ingredients. The first one are pointwise bounds for functions in $\me{-t\cl_{a,\alpha}}C_c^\infty(\R^d)$ and on their local H\"older seminorms. For a function $u$ on a set $\Omega$ and $0<\beta\leq 2$, we write
\begin{equation}
	\label{eq:holder}
	[u]_{C^\beta(\Omega)} :=
	\begin{cases}
		\sup_{x,y\in\Omega} \frac{|u(x)-u(y)|}{|x-y|^\beta} & \text{if}\ 0<\beta\leq 1 \,, \\
		\sup_{x,y\in\Omega} \frac{|\nabla u(x)-\nabla u(y)|}{|x-y|^{\beta-1}} & \text{if}\ 1<\beta\leq 2 \,.
	\end{cases}
\end{equation}

\begin{lemma}\label{pointwise}
	Let $\alpha$, $a$, and $\delta$ be as in Theorem~\ref{Thm:1.2density}. Let $0<t<\infty$ and $\psi\in e^{-t\cl_{a,\alpha}}C_c^\infty(\R^d)$. Then, for all $x\in\R^d$,
	\begin{align}
		\label{eq:heatbound1}
		|\psi(x)| & \lesssim |x|^{-\delta} \wedge |x|^{-d-\alpha} \,, \\
		\label{eq:heatbound2}
		|\cl_{a,\alpha}\psi(x)| & \lesssim |x|^{-\delta} \wedge |x|^{-d-\alpha} \,, \\
		\label{eq:heatbound3}
		||p|^{\alpha} \psi(x)| & \lesssim (1+|x|^{-\alpha}) \cdot (|x|^{-\delta}\wedge |x|^{-d-\alpha}) \,, \\
		\label{eq:heatbound4}
		[\psi]_{C^\beta(B_{\ell_x}(x))} & \lesssim (1\wedge|x|)^{-\delta-\beta} \cdot (1\wedge |x|^{-d-\alpha})
		\quad \text{with}\ \ell_x:=1\wedge\tfrac{|x|}{2} \,,\ 0<\beta<\alpha \,.
	\end{align} 
\end{lemma}

The second ingredient prove Theorem~\ref{Thm:1.2density} are bounds for the commutator
\begin{align}\label{eq:commutatorpointwise}
	[|p|^{\alpha},\zeta] v(x) = \mathcal A(d,-\alpha) \int_{\R^d} \frac{\zeta(x)-\zeta(y)}{|x-y|^{d+\alpha}} v(y)\,dy
\end{align}
with appropriate cut-off functions $\zeta\in C^\infty(\R^d)$. When $\alpha<1$, the right side of \eqref{eq:commutatorpointwise} converges pointwise for sufficiently fast decaying $v$; for $\alpha\in[1,2)$, it is understood as principal value integral and can be expressed as in \eqref{eq:commutator1lalpha} below. Concerning the functions $v$ we will assume the size estimate
\begin{equation}
	\label{eq:assv1}
	|v(x)|\leq (1\wedge|x|)^{-\delta} \cdot (1\wedge |x|^{-d-\alpha}) \qquad\text{for all}\ x\in\R^d \, 
\end{equation}
and, for $\beta>\alpha-1$, in addition the regularity estimate
\begin{equation}
	\label{eq:assv2}
	[v]_{C^\beta(B_{\ell_x}(x))} \leq (1\wedge |x|^{-\delta-\beta}) \cdot (1 \wedge |x|^{-d-\alpha})
	\quad \text{for all}\ x\in\R^d\ \text{with}\ \ell_x := 1\wedge \tfrac{|x|}2\,,
\end{equation}
where $\delta\in(-\alpha,\frac{d-\alpha}{2}]$ is now an \emph{arbitrary} parameter.
For two parameters $0<r\ll 1$ and $1 \ll R <\infty$, we choose $\zeta(x)=\chi(x)\cdot\theta(x)$, where
\begin{align}
	\label{eq:asschi1}
	& 0\leq\chi\leq 1 \,,
	\quad
	\chi(x) = 1 \ \text{if}\ |x|\leq R \,,
	\quad
	\chi(x) = 0 \ \text{if}\ |x|\geq 2R \,,
	\quad
	|\nabla\chi|\lesssim R^{-1}\,, \\
	\label{eq:asstheta1}
	& 0\leq\theta\leq 1 \,,
	\quad
	\theta(x) = 0 \ \text{if}\ |x|\leq r \,,
	\quad
	\theta(x) = 1 \ \text{if}\ |x|\geq 2r \,,
	\quad
	|\nabla\theta|\lesssim r^{-1} \,.
\end{align}
If $\alpha\geq1$, we also assume
\begin{align}
	\label{eq:asschi2}
	|D^2\chi| & \lesssim R^{-2}\,, \\
	\label{eq:asstheta2}
	|D^2\theta| & \lesssim r^{-2} \,,
\end{align}
where $D^2\chi$ and $D^2\theta$ denote the Hessians of $\chi$ and $\theta$ respectively.

\begin{lemma}\label{cutoffcomb}
	Let $0<\alpha<2$. Let $0<r\leq 1\leq R<\infty$, assume that $\chi$ and $\theta$ satisfy \eqref{eq:asschi1} and \eqref{eq:asstheta1} and, if $\alpha\geq 1$, also \eqref{eq:asschi2} and \eqref{eq:asstheta2}. Let $-\alpha<\delta\leq\frac{d-\alpha}{2}$, assume that $v$ satisfies \eqref{eq:assv1} and, if $\alpha\geq 1$, also \eqref{eq:assv2} with some $\beta>\alpha-1$. Then
	$$
	\| [(-\Delta)^{\alpha/2},\chi\theta] v \|_{L^2(\R^d)}
	\lesssim r^{-\delta-\alpha+d/2} + R^{-\alpha-d/2} \,.
	$$
\end{lemma}

Once these two lemmas have been proved, we can follow the proof of \cite[Theorem 24]{FrankMerz2023} to deduce that, for any $f\in L^2(\R^d)$, there is a sequence $(\phi_n)\subset C^\infty_c(\R^d\setminus\{0\})$ such that
$$
L_\lambda^{s/2}\phi_n\to f
\ \text{in}\ L^2(\R^d) \,,
$$
and that if, in addition $f\in\dom L_\lambda^{-s/2}$, then the sequence can be chosen such that
$$
\phi_n\to L_\lambda^{-s/2}f
\ \text{in}\ L^2(\R^d) \,.
$$
Once this is shown, we can follow the proof of \cite[Theorem 1]{FrankMerz2023} to show the operator core property claimed in Theorem \ref{Thm:1.2density} and, consequently, to extend the inequalities to the corresponding operator domains. We omit the details of this argument and instead comment on the proofs of the two ingredients mentioned above.

\begin{proof}[Proof of Lemma \ref{pointwise}]
	Inequalities \eqref{eq:heatbound1}, \eqref{eq:heatbound2} and \eqref{eq:heatbound3} follow rather directly from the heat kernel bounds in Theorem~\ref{Thm:2.1}. The bound \eqref{eq:heatbound4} uses Schauder estimates for the fractional Laplacian (see, for instance, \cite[Corollary 2.5]{RosOtonSerra2014}) together with the bounds \eqref{eq:heatbound1} and \eqref{eq:heatbound3}. The arguments are similar (and in fact slightly simpler) to those in \cite[Lemma~26]{FrankMerz2023}.
\end{proof}

\begin{proof}[Proof of Lemma \ref{cutoffcomb}]
	For fixed $v$ as in the lemma, let us set
	\begin{align*}
		I(x) := \int_{\R^d} \frac{\chi(x)-\chi(y)}{|x-y|^{d+\alpha}}v(y)\,dy
		\qquad \text{and} \qquad
		II(x) := \int_{\R^d} \frac{\theta(x)-\theta(y)}{|x-y|^{d+\alpha}}v(y)\,dy.
	\end{align*}
	The assertion of the lemma follows easily from $L^2$-bounds on $I$ and $II$, which we will prove in the following two steps.
	
	\medskip

	\emph{Step 1.} Let $R\geq 1$, assume that $\chi$ satisfies \eqref{eq:asschi1} and, if $\alpha\geq 1$ also \eqref{eq:asschi2}. Let $\delta\leq \tfrac{d-\alpha}2$, assume that $v$ satisfies \eqref{eq:assv1} and, if $\alpha\geq 1$, also \eqref{eq:assv2} with some $\beta>\alpha-1$. Then, we claim
        \begin{align}
          \label{eq:boundI}
          |I(x)| \lesssim \one_{|x|\leq R} R^{-d-2\alpha} + \one_{|x|>R} |x|^{-d-\alpha}
          \qquad\text{for all}\ x\in\R^d \,.
	\end{align}
	In particular,
	$$
	\| I \|_{L^2(\R^d)} \lesssim R^{-\alpha-d/2} \,.
	$$

	The proof of \eqref{eq:boundI} is similar to (and in fact simpler than) that of the analogous bound in \cite[Lem\-ma~20]{FrankMerz2023}. The only difference is that the singularity $x_d^{p-\beta}$ in \cite{FrankMerz2023} is replaced by $|x|^{-\delta-\beta}$ here; but since this singularity is irrelevant for the large length scales involved in the claimed bound, we can argue as in \cite{FrankMerz2023}. The logarithmic terms of \cite{FrankMerz2023} are absent here because the responsible integral \cite[(32)]{FrankMerz2023} is bounded by
	\begin{align*}
		& \one_{\frac R2<|x|\leq4R}\,\frac1R \int_{\frac{R}{4}<|y|\leq8R}\frac{\one_{|x-y|>\ell_x}}{|x-y|^{d+\alpha-1}}|v(y)|\,dy
		\lesssim \one_{\frac R2<|x|\leq4R}\,\frac{1}{R^{d+\alpha}}\int_{\R^d}\frac{dy}{1+|x-y|^{d+\alpha}} \\
		& \quad \lesssim R^{-d-\alpha}\one_{R/2<|x|\leq4R}.
	\end{align*}
	Here we used $|v(y)|\one_{|y|\sim R}\lesssim R^{-d-\alpha}$, $\ell_x\one_{|x|\sim R}=1$, and $|x-y|\one_{|x|\sim|y|\sim R}\lesssim R$.

	\medskip
	
	\emph{Step 2.} Let $r\leq 1$ and assume that $\theta$ satisfies \eqref{eq:asstheta1} and, if $\alpha\geq 1$, also \eqref{eq:asstheta2}. Let $-\alpha<\delta\leq\frac{d-\alpha}{2}$, assume that $v$ satisfies \eqref{eq:assv1} and, if $\alpha\geq 1$, also \eqref{eq:assv2} with some $\beta>\alpha-1$. Then, we claim
	\begin{align}
          \label{eq:boundII}
          |II(x)| \lesssim r^{-\delta-\alpha}\one_{|x|\leq4r} + r^{d-\delta}|x|^{-d-\alpha}\one_{|x|>r/2}
          \qquad\text{for all}\ x\in\R^d \,.
	\end{align}
	In particular,
	$$
	\| II \|_{L^2(\R^d)} \lesssim r^{-\delta-\alpha+d/2} \,.
	$$

	The proof of \eqref{eq:boundII} is similar to that of \eqref{eq:boundI} in Step 1, but requires a slightly more careful consideration of the singularity at the origin. Therefore we include some details.

	\emph{Case $\alpha<1$}. It is easy to see that
  \begin{align}\label{eq:radial1}
    |II(x)| & \lesssim \one_{|x|\leq 4r} \int_{|y|>r} \frac{1}{|y|^{d+\alpha}} |v(y)| \,dy + \one_{|x|>\tfrac r2} \frac{1}{|x|^{d+\alpha}} \int_{|y|\leq 2r} |v(y)| \,dy \notag \\
           & \quad + \one_{\tfrac r2<|x|\leq 4r} \frac 1r \int_{\tfrac r4<|y|\leq 8r} \frac{1}{|x-y|^{d+\alpha-1}} |v(y)| \,dy \,.
  \end{align}
  Inserting the bounds on $v$ into the right side of \eqref{eq:radial1} yields
  $$
  \int_{|y|\leq 2r} (|y|^{-\delta}\wedge |y|^{-d-\alpha}) \,dy \lesssim r^{d-\delta}
  $$
  and, since $\delta>-\alpha$,
  $$
  \int_{|y|>r} \frac1{|y|^{d+\alpha}} (|y|^{-\delta}\wedge |y|^{-d-\alpha}) \,dy \lesssim r^{-\delta-\alpha} \,.
  $$
  Finally, if $\tfrac r2<|x|\leq 4r$, then, since $\alpha\in(0,1)$,
  \begin{align*}
    \int_{\tfrac r4<|y|\leq 8r} \frac{1}{|x-y|^{d+\alpha-1}} (|y|^{-\delta} \wedge |y|^{-d-\alpha}) \,dy
    \lesssim r^{-\delta} \int_{\tfrac r4<|y|\leq 8r} \frac{dy}{|x-y|^{d+\alpha-1}}
    \lesssim r^{-\delta-\alpha+1} \,.
  \end{align*}
  Here we replaced the integral over $\{\tfrac r4<|y|\leq 8r\}$ by the integral over $|x-y|\leq 12r$. This proves the claimed pointwise bound. The $L^2$-bound follows by an integration.

	\medskip
	
	\emph{Case $\alpha\geq 1$}. We fix a local length scale $\ell_x$, depending on $x\in\R^d$, and decompose
\begin{align}
  \label{eq:commutator1lalpha}
  \begin{split}
    II(x)
    & = \int_{|y-x|\leq\ell_x} \frac{\theta(x)-\theta(y)}{|x-y|^{d+\alpha}} (v(y)-v(x))\,dy \\
    & \quad + v(x) \int_{|y-x|\leq\ell_x} \frac{\theta(x)-\theta(y)+\nabla\theta(x)\cdot(y-x)}{|x-y|^{d+\alpha}} \,dy \\
    & \quad + \int_{|y-x|>\ell_x} \frac{\theta(x)-\theta(y)}{|x-y|^{d+\alpha}} v(y)\,dy \,.
  \end{split}
\end{align}
Note that because of the principal value we were free to introduce the term $\nabla\theta(x)\cdot(y-x)$, which contributes zero to the integral (because of oddness), but makes it converge absolutely. We bound the first term by
\begin{align}\label{eq:commutator1lalpha1}
  \begin{split}
    & \left| \int_{|y-x|\leq\ell_x} \frac{\theta(x)-\theta(y)}{|x-y|^{d+\alpha}} (v(y)-v(x))\,dy \right| \\
    & \quad \leq [v]_{C^\beta(B_{\ell_x}(x))} [\theta]_{C^1(B_{\ell_x}(x))} \int_{|y-x|\leq\ell_x} \frac{dy}{|x-y|^{d+\alpha-1-\beta}} \,dy \\
    & \quad \lesssim [v]_{C^\beta(B_{\ell_x}(x))} [\theta]_{C^1(B_{\ell_x}(x))} \ell_x^{-\alpha+1+\beta}
  \end{split}
\end{align}
for some $\beta>\alpha-1$. Similarly, we bound the second term by
\begin{align}\label{eq:commutator1lalpha2}
  \begin{split}
    & \left| v(x) \int_{|y-x|\leq\ell_x} \frac{\theta(x)-\theta(y)+\nabla\theta(x)\cdot(y-x)}{|x-y|^{d+\alpha}} \,dy \right| \\
    & \quad \leq |v(x)| [\theta]_{C^2(B_{\ell_x}(x))} \int_{|y-x|\leq\ell_x} \frac{dy}{|x-y|^{d+\alpha-2}} \,dy
      \lesssim |v(x)| [\theta]_{C^2(B_{\ell_x}(x))} \ell_x^{2-\alpha} \,.
  \end{split}
\end{align}

  For the first term in \eqref{eq:commutator1lalpha} we use the bound \eqref{eq:commutator1lalpha1} and note that $[\theta]_{C^1(B_{\ell_x}(x))}$ vanishes unless $|x|\sim r$, in which case it is $\lesssim r^{-1}$. This leads to a bound
  $$
  \one_{|x|\sim r} (1\wedge |x|^{-d-\alpha}) (1\wedge |x|)^{-\delta-\alpha+1} r^{-1} \sim \one_{|x|\sim r}\,r^{-\delta-\alpha} \,. 
  $$
  Similarly, for the second term in \eqref{eq:commutator1lalpha} using the bound \eqref{eq:commutator1lalpha2} we obtain
  $$
  \one_{|x|\sim r} (1\wedge |x|^{-d-\alpha}) (1\wedge |x|)^{-\delta-\alpha+2} r^{-2} \sim \one_{|x|\sim r}\,r^{-\delta-\alpha} \,.
  $$
  
  We now turn to the third term in \eqref{eq:commutator1lalpha},
  $$
  \widetilde{II}(x) := \int_{|y-x|>\ell_x} \frac{\theta(x)-\theta(y)}{|x-y|^{d+\alpha}} v(y)\,dy \,,
  $$
  which is bounded by
  % Arguing as in the proof of \eqref{eq:radial1}, we obtain
  \begin{align}\label{eq:radial2}
    \left| \widetilde{II}(x) \right| & \lesssim \one_{|x|\leq 4r} \int_{|y|>r} \frac{1}{|y|^{d+\alpha}} |v(y)| \,dy + \one_{|x|>\tfrac r2} \frac{1}{|x|^{d+\alpha}} \int_{|y|\leq 2r} |v(y)|\,dy \notag \\
                                    & \quad + \one_{\tfrac r2<|x|\leq 4r} \frac1r \int_{\tfrac r4<|y|\leq 8r} \frac{\one_{|x-y|>\ell_x}}{|x-y|^{d+\alpha-1}}|v(y)|\,dy \,.
  \end{align}

  We now insert the bounds on $v$ into the right side of \eqref{eq:radial2}. The first two terms are bounded as in the case $\alpha<1$. To bound the third term in \eqref{eq:radial2}, we use $|v(y)|\one_{|y|\sim r}\lesssim r^{-\delta}$, $\ell_x=|x|/2\geq r/4$ for $|x|\geq r/2$, and $r/4<|x-y|\in[r/4,12r]$, and obtain
  \begin{align*}
    & \int_{\tfrac r4<|y|\leq 8r} \frac{\one_{|x-y|>\ell_x}}{|x-y|^{d+\alpha-1}}|v(y)|\,dy
      \lesssim r^{-\delta+1-\alpha}.
  \end{align*}
  
  Combining all these bounds, we obtain the claimed pointwise bound. As before, the $L^2$-bound follows by an integration.
\end{proof}

%%%%%%%%%%%%%%%%%%%%%%%%%%%%%%%%%%%%%%%%%%%%%%%%%%%%%%%%%%%%%%%%%%%%%%%%%%%%%%%%
%%%%%%%%%%%

% \bibliographystyle{amsalpha}
% \bibliography{coulomb}

\begin{thebibliography}{KMV{\etalchar{+}}18}

\bibitem[BG60]{BlumenthalGetoor1960}
R.~M. Blumenthal and R.~K. Getoor, \emph{Some theorems on stable processes},
  Trans. Amer. Math. Soc. \textbf{95} (1960), 263--273.

\bibitem[BGJP19]{Bogdanetal2019}
Krzysztof Bogdan, Tomasz Grzywny, Tomasz Jakubowski, and Dominika Pilarczyk,
  \emph{Fractional {L}aplacian with {H}ardy potential}, Comm. Partial
  Differential Equations \textbf{44} (2019), no.~1, 20--50.

\bibitem[Car10]{Carlen2010}
Eric Carlen, \emph{Trace inequalities and quantum entropy: an introductory
  course}, Entropy and the Quantum, Contemp. Math., vol. 529, Amer. Math. Soc.,
  Providence, RI, 2010, pp.~73--140.

\bibitem[CKSV18]{Choetal2018}
Soobin {Cho}, Panki {Kim}, Renming {Song}, and Zoran {Vondra{\v{c}}ek},
  \emph{Factorization and estimates of {D}irichlet heat kernels for non-local
  operators with critical killings}, arXiv e-prints (2018), arXiv:1809.01782.

\bibitem[Dav65]{Davis1965}
Philip~J. Davis, \emph{Gamma function and related functions}, Handbook of
  Mathematical Functions with Formulas, Graphs, and Mathematical Tables (Milton
  Abramowitz and Irene~A.\ Stegun, eds.), Dover Publications, New York, 1965,
  pp.~253--293.

\bibitem[EL07]{EstebanLoss2007}
Maria~J. Esteban and M.~Loss, \emph{Self-adjointness for {D}irac operators via
  {H}ardy-{D}irac inequalities}, J. Math. Phys. \textbf{48} (2007), no.~11,
  112107, 8.

\bibitem[FLS08]{Franketal2008H}
Rupert~L. Frank, Elliott~H. Lieb, and Robert Seiringer,
  \emph{Hardy-{L}ieb-{T}hirring inequalities for fractional {S}chr\"odinger
  operators}, J. Amer. Math. Soc. \textbf{21} (2008), no.~4, 925--950.

\bibitem[FM23]{FrankMerz2023}
Rupert~L. {Frank} and Konstantin {Merz}, \emph{On {S}obolev norms involving
  {H}ardy operators in a half-space}, arXiv e-prints (2023), arXiv:2303.05348.

\bibitem[FMSS20]{Franketal2020P}
Rupert~L. {Frank}, Konstantin {Merz}, Heinz {Siedentop}, and Barry {Simon},
  \emph{Proof of the strong {S}cott conjecture for {C}handrasekhar atoms}, Pure
  Appl. Funct. Anal. \textbf{5} (2020), no.~6, 1319--1356.

\bibitem[FS08]{FrankSeiringer2008}
Rupert~L. Frank and Robert Seiringer, \emph{Non-linear ground state
  representations and sharp {H}ardy inequalities}, J. Funct. Anal. \textbf{255}
  (2008), no.~12, 3407--3430.

\bibitem[Her77]{Herbst1977}
Ira~W. Herbst, \emph{Spectral theory of the operator {$(p^2+m^2)^{1/2} -
  Ze^2/r$}}, Comm.\ Math.\ Phys. \textbf{53} (1977), 285--294.

\bibitem[ILS96]{Iantchenkoetal1996}
Alexei Iantchenko, Elliott~H.\ Lieb, and Heinz Siedentop, \emph{Proof of a
  conjecture about atomic and molecular cores related to {S}cott's correction},
  J.\ reine angew.\ Math. \textbf{472} (1996), 177--195.

\bibitem[JW18]{JakubowskiWang2018}
Tomasz {Jakubowski} and Jian {Wang}, \emph{Heat kernel estimates of fractional
  {S}chr\"odinger operators with negative {H}ardy potential}, arXiv e-prints
  (2018).

\bibitem[Kat66]{Kato1966}
Tosio Kato, \emph{Perturbation theory for linear operators}, 1 ed., Grundlehren
  der mathematischen {W}issenschaften, vol. 132, Springer-Verlag, Berlin, 1966.

\bibitem[KMV{\etalchar{+}}18]{Killipetal2018}
R.~Killip, C.~Miao, M.~Visan, J.~Zhang, and J.~Zheng, \emph{Sobolev spaces
  adapted to the {S}chr\"odinger operator with inverse-square potential}, Math.
  Z. \textbf{288} (2018), no.~3-4, 1273--1298.

\bibitem[KPS81]{Kovalenkoetal1981}
V.~F. Kovalenko, M.~A. Perelmuter, and Ya.~A. Semenov, \emph{Schr\"odinger
  operators with ${L}_w^{l/2}(\mathbb{R}^l)$-potentials}, J. Math. Phys.
  \textbf{22} (1981), 1033--1044.

\bibitem[KVZ16]{Killipetal2016}
Rowan Killip, Monica Visan, and Xiaoyi Zhang, \emph{Riesz transforms outside a
  convex obstacle}, Int. Math. Res. Not. IMRN (2016), no.~19, 5875--5921.

\bibitem[MM17]{MorozovMueller2017}
Sergey Morozov and David M{\"u}ller, \emph{Lower bounds on the moduli of
  three-dimensional {C}oulomb-{D}irac operators via fractional {L}aplacians
  with applications}, J. Math. Phys. \textbf{58} (2017), no.~7, 072302, 22.

\bibitem[ROS14]{RosOtonSerra2014}
Xavier Ros-Oton and Joaquim Serra, \emph{The {D}irichlet problem for the
  fractional {L}aplacian: regularity up to the boundary}, J. Math. Pures Appl.
  (9) \textbf{101} (2014), no.~3, 275--302. \MR{3168912}

\bibitem[SW71]{SteinWeiss1971}
Elias~M. Stein and Guido Weiss, \emph{Introduction to {F}ourier analysis on
  {E}uclidean spaces}, 2 ed., Princeton University Press, Princeton, New
  Jersey, 1971.

\bibitem[Yaf99]{Yafaev1999}
D.~Yafaev, \emph{Sharp constants in the {H}ardy-{R}ellich inequalities}, Journ.
  Functional Analysis \textbf{168} (1999), no.~1, 212--144.

\end{thebibliography}

\newcommand{\etalchar}[1]{$^{#1}$}
\def\cprime{$'$}
\providecommand{\bysame}{\leavevmode\hbox to3em{\hrulefill}\thinspace}
\providecommand{\MR}{\relax\ifhmode\unskip\space\fi MR }
% \MRhref is called by the amsart/book/proc definition of \MR.
\providecommand{\MRhref}[2]{%
  \href{http://www.ams.org/mathscinet-getitem?mr=#1}{#2}
}
\providecommand{\href}[2]{#2}

\end{document}